\numberwithin{equation}{section}
\theoremstyle{plain}
\newtheorem{thm}{Theorem}[section]
\newtheorem{lem}[thm]{Lemma}
\newtheorem{conj}[thm]{Conjecture}
\newtheorem{rem}[thm]{Remark}
\newtheorem*{acknow}{Acknowledgments}
\def\tri{\triangle}
\def\l{\langle}
\def\r{\rangle}
\def\B{\mathcal{B}}
\def\C{\mathcal{C}}
\def\ll{\left}
\def\rr{\right}
\def\nn{\nabla}
\def\u{\mathfrak{u}}
\title[Lu's conjecture for minimal surfaces]{Lu's conjecture for minimal surfaces}
\author[W. R. Ding]{Weiran Ding$^{1}$}
\address{$^{1}$School of Mathematical Sciences, South China Normal University, Guangzhou, Guangdong 510000, P. R. CHINA.}
\email{dingwr0806@m.scnu.edu.cn}
\author[J. Q. Ge]{Jianquan Ge$^{2}$}
\address{$^{2}$School of Mathematical Sciences, Laboratory of Mathematics and Complex Systems, Beijing Normal University, Beijing 100875, P. R. CHINA.}
\email{jqge@bnu.edu.cn}
\author[F. G. Li]{Fagui Li$^{3,*}$}
\address{$^{3,*}$Frontier Interdisciplinary Domain, Beijing Institute of Technology, Zhuhai, Guangdong 519088, P. R. CHINA.}
\email{lifagui@bitzh.edu.cn}
\author[X. Z. Yang]{Xize Yang$^{2}$}
\address{$^{2}$School of Mathematical Sciences, Laboratory of Mathematics and Complex Systems, Beijing Normal University, Beijing 100875, P. R. CHINA.}
\email{yangxize@mail.bnu.edu.cn}
\subjclass[2010]{15A45, 15B57, 53C24, 53C42.}
\date{}
\keywords{Lu's conjecture, Simon's conjecture, minimal surface.}
\thanks{* the corresponding author.}
\thanks{J. Q. Ge is partially supported by NSFC (No. 12571049) and the Fundamental Research Funds for the Central Universities.}
\thanks{F. G. Li is partially supported by  NSFC (No. 12271040, 12501061) and Research Start up Funding of Beijing Institute of Technology (No. 5640011253301).}
\begin{document}
\maketitle

\begin{abstract}
	After Chern's conjecture on the discreteness of the constant scalar curvatures of compact minimal submanifolds $M^n$ in unit spheres $\mathbb{S}^{n+q}$, Z. Q. Lu proposed a conjecture regarding the second gap, based on his ingenious refinement of the known first gap theorem. This refinement unifies Simons' first gap theorem for hypersurfaces with the corresponding theorems for high-codimensional submanifolds established by Yau, Shen, Li and Li, among others. In this paper, for arbitrary codimension, we prove Lu's conjecture for minimal 2-spheres, and for any minimal surfaces under some slight inequality conditions about the normal scalar curvature.
\end{abstract}

\section{Introduction}
Let $f:M^n\to\tilde{M}^{n+q}(c)$ be an isometric immersion of an $n$-dimensional submanifold $M$ into the $(n+q)$-dimensional real space form $\tilde{M}^{n+q}(c)$ of constant sectional curvature $c$. The normalized scalar curvature $\rho$ and normal scalar curvature $\rho^\perp$ are defined by 
\begin{align*}
	\rho&=\frac{2}{n(n-1)}\sum_{1=i<j}^{n}R(e_i,e_j,e_j,e_i),\\
	\rho^\perp&=\frac{1}{n(n-1)}\left|R^\perp\right|=\frac{2}{n(n-1)}\left(\sum_{1=i<j}^{n}\sum_{1=r<s}^q\left\langle R^\perp(e_i,e_j)\xi_r,\xi_s\right\rangle^2\right)^{1/2},
\end{align*}
where $\{e_1,\cdots,e_n\}$ is an orthonormal basis of the tangent space, and $R$ is the curvature tensor of the tangent bundle. Similarly, $\{\xi_1,\cdots,\xi_q\}$ is an orthonormal basis of the normal space, and $R^\perp$ is the curvature tensor of the normal bundle. Let $h$ be the second fundamental form and let $H=\frac{1}{n}\operatorname{Tr}h$ be the mean curvature vector field. The DDVV conjecture, due to De Smet, Dillen, Verstraelen, and Vrancken \cite{DDVV}, asserts the following pointwise inequality:
\begin{equation}\label{ddvv1}
	\rho+\rho^\perp\le|H|^2+c.
\end{equation}
Since \eqref{ddvv1} is a pointwise inequality among $\rho,\rho^\perp$, and $|H|^2$, it follows from the Gauss equation and the Ricci equation that it is equivalent to the following algebraic inequality \cite{Dillen}:
\begin{equation*}
	\sum_{r,s=1}^q\|[B_r,B_s]\|^2\le\left(\sum_{r=1}^q\|B_r\|^2\right)^2,
\end{equation*} 
where $B_1,\cdots,B_q$ are $n\times n$ real symmetric matrices, $\|\cdot\|^2$ denotes the sum of the squares of the entries of the matrix and $[A,B]=AB-BA$ is the commutator of the matrices $A$ and $B$. The first nontrivial case, $n=3$, was established by Choi and Lu \cite{Choi-Lu}. A weaker version for general $n$ and $q$ was later obtained in \cite{Dillen}. Finally, the DDVV conjecture was independently proved by Ge and Tang \cite{Ge-Tang} and by Lu \cite{Lu}. For further details on the DDVV-type inequalities, we refer the reader to \cite{GLTZ,GLZ}. Building on the methods developed in the proof of the DDVV conjecture, Lu derived several pinching theorems and proposed the following conjecture:
\begin{conj}[Lu's Conjecture \cite{Lu}]\label{Lu}
	Let $M^n$ be a closed immersed minimal submanifold of the unit sphere $\mathbb{S}^{n+q}$. Let $\lambda_1\ge\lambda_2\ge\cdots\ge\lambda_q$ be the eigenvalues of the fundamental matrix $A=(a_{\alpha\beta})=(\l S_\alpha,S_\beta\r)$, where $\{S_{\alpha}\}_{\alpha=1}^q$ are shape operators with respect to an orthonormal basis $\{\xi_\alpha\}_{\alpha=1}^q$. If $S+\lambda_2$ is a constant and if $$S+\lambda_2>n,$$ then there is a constant $\epsilon(n,q)>0$ such that $$S+\lambda_2>n+\epsilon(n,q).$$
\end{conj}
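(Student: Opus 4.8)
The plan is to run the Simons machinery but to squeeze a genuine \emph{second} gap out of the constancy hypothesis rather than merely re-deriving the first one. Write $S=\sum_\alpha\lambda_\alpha=\operatorname{tr}A$ and recall the Simons identity for a minimal $M^n\subset\mathbb{S}^{n+q}$,
\[
\tfrac12\Delta S=|\nabla h|^2+nS-\sum_{\alpha,\beta}\|[S_\alpha,S_\beta]\|^2-\sum_\alpha\lambda_\alpha^2,
\]
together with Lu's sharp refinement of DDVV,
\[
\sum_{\alpha,\beta}\|[S_\alpha,S_\beta]\|^2+\sum_\alpha\lambda_\alpha^2\le(\lambda_1+\lambda_2)\,S .
\]
Integrating over the closed $M$ and combining the two gives the first-order estimate $\int_M|\nabla h|^2\le\int_M\big[(\lambda_1+\lambda_2)-n\big]S$. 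Since $\lambda_1+\lambda_2\le S\le S+\lambda_2=:c$ and $S\ge0$, this already yields the first gap ($c\le n\Rightarrow\nabla h\equiv0$ and rigidity) and, in the present regime, the bound $\int_M|\nabla h|^2\le(c-n)\int_M S$. The whole difficulty of the conjecture is that this \emph{upper} bound must be matched against a \emph{lower} bound unavailable at first order.

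Second, I would exploit the hypothesis that $c=S+\lambda_2$ is constant: it forces $\lambda_2=c-S$ to be a smooth function rigidly tied to $S=|h|^2$, which is exactly what lets the problem close up into a single elliptic relation despite the a priori nonsmoothness of eigenvalue fields. I would then differentiate the Simons identity once more and assemble the Bochner-type formula for $\tfrac12\Delta|\nabla h|^2$, expanding it through the Ricci and Codazzi commutation rules into $|\nabla^2h|^2$ plus cubic and quartic curvature terms. The target is a Peng–Terng-type reverse inequality: a pointwise lower bound on the quartic terms valid near the equality configuration of Lu's inequality, producing $\int_M|\nabla h|^2\ge\kappa(n,q)\,(c-n)\int_M S$ with $\kappa>1$ whenever $c\in(n,n+\epsilon]$. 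Played against the first-order upper bound, this forces $(c-n)\int_M S=0$, hence (as $M$ is not totally geodesic in the regime $c>n$) forces $c>n+\epsilon(n,q)$.

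The third ingredient is a quantitative rigidity analysis of Lu's algebraic inequality. One must classify the tuples $(S_1,\dots,S_q)$ attaining $\sum\|[S_\alpha,S_\beta]\|^2+\sum\lambda_\alpha^2=(\lambda_1+\lambda_2)S$ — the normal-curvature analogues of the Clifford/Veronese equality cases — and then establish a deficit estimate: any configuration lying within $O(c-n)$ of this set must have $S$ and the full spectrum $\lambda_1,\dots,\lambda_q$ pinned near the model values. This is what converts \emph{near equality pointwise} into a \emph{gap globally}, and it is also what supplies the explicit constant $\epsilon(n,q)$.

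The main obstacle is the second-order estimate in \emph{arbitrary} codimension. For hypersurfaces the quartic terms are sign-controlled and the Peng–Terng computation goes through, but for $q\ge2$ the cubic and quartic terms in $\tfrac12\Delta|\nabla h|^2$ carry the commutators $[S_\alpha,S_\beta]$ — that is, the normal curvature — which are not sign-definite and do not admit a uniform pointwise bound of the shape needed for $\kappa>1$. A second difficulty is the genuine gap between the \emph{controlled} quantity $\lambda_1+\lambda_2$ and the \emph{conjectural} quantity $S+\lambda_2$: the missing eigenvalues $\lambda_3,\dots,\lambda_q$ feed into $S$ but not into $\lambda_1+\lambda_2$, so one needs supplementary pointwise inequalities forcing $\sum_{\alpha\ge3}\lambda_\alpha$ to be negligible in the pinched regime. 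For $n=2$ both difficulties collapse: each $S_\alpha$ is $2\times2$ symmetric and traceless, so $A=2VV^{T}$ has rank $\le2$, whence $\lambda_3=\dots=\lambda_q=0$ and $\lambda_1+\lambda_2=S$ exactly. This is precisely why the surface case is the one within reach, and where I would first establish the two lemmas above as the template before attacking general $n$.
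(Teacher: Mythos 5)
Your submission is a research programme, not a proof. The two load-bearing steps --- the Peng--Terng-type reverse inequality $\int_M|\nabla h|^2\ge\kappa(n,q)\,(c-n)\int_M S$ with $\kappa>1$ near the pinched regime, and the quantitative ``deficit estimate'' near the equality set of Lu's algebraic inequality --- are precisely the content that a proof of the conjecture would have to supply, and you do not supply either; you yourself identify them as obstacles you cannot overcome for $q\ge2$. There is also a concrete error in your starting point: the ``sharp refinement of DDVV'' you attribute to Lu, namely $\sum_{\alpha,\beta}\|[S_\alpha,S_\beta]\|^2+\sum_\alpha\lambda_\alpha^2\le(\lambda_1+\lambda_2)S$, is false. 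Take $n=q=2$ with $S_1=\mathrm{diag}(1,-1)$ and $S_2$ the off-diagonal symmetric matrix with entries $1$: then the left side is $16+8=24$ while $(\lambda_1+\lambda_2)S=16$. The inequality actually underlying Lu's rigidity theorem has $S(S+\lambda_2)$ on the right (here $=24$, with equality), and for surfaces the identity $2S^2=\rho_0^\perp+2|A|^2$ from the paper shows your version fails whenever $\rho^\perp\ne0$. Your subsequent bound $\int_M|\nabla h|^2\le(c-n)\int_M S$ happens to survive because it follows from the correct inequality, but the intermediate step as written is wrong.

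On the comparison with what the paper actually establishes (it, too, proves the conjecture only for $n=2$, and unconditionally only for $2$-spheres): your observation that $A=2aa^T+2bb^T$ has rank at most $2$, so $\lambda_3=\cdots=\lambda_q=0$, is correct and is the paper's Lemma on the eigenvalues of $A$. But for $2$-spheres the paper does not run a Peng--Terng argument at all: it shows that $\bigl(|a|^2-|b|^2-2i\langle a,b\rangle\bigr)dz^4$ is a holomorphic quartic differential, hence vanishes on $S^2$, which forces $\lambda_1=\lambda_2=\frac{S}{2}$ and $S+\lambda_2=\frac{3}{2}S$; the second gap with the sharp constant $\epsilon=\frac12$ then follows by quoting the resolved $s=2$ case of Simon's conjecture ($\frac43\le S\le\frac53$ implies $S\in\{\frac43,\frac53\}$). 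For general surfaces the paper does carry out the second-order integral identity you envisage (via $\Delta\mathcal{B}_1$), but --- exactly because of the non-sign-definite commutator terms you flag --- the resulting gap theorems require auxiliary pinching hypotheses on $\rho^\perp$ and do not yield the conjecture unconditionally. So your outline correctly diagnoses why the problem is hard, but it neither closes the surface case by the route you propose nor touches $n\ge3$.
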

\Cref{Lu} is based on the following rigidity theorem of Lu \cite{Lu}: \emph{Let $M^n$ be a closed minimal submanifold of the unit sphere $\mathbb{S}^{n+q}$. If $0\leq S+\lambda_2\leq n$, then $M$ is totally geodesic, or one of the Clifford tori $M_{k,n-k}$ $(1\leq k<n)$ in $\mathbb{S}^{n+q}$, or a Veronese surface in $\mathbb{S}^{2+q}$.} The above pinching theorem generalized the earlier pinching theorems established by Simons \cite{Simons}, Chern, do Carmo, and Kobayashi \cite{CDK}, Yau \cite{Yau 1974,Yau 1975}, Shen \cite{YBShen 1989}, as well as Li and Li \cite{LiLi 1992}, etc. Recently, Leng and Xu \cite{LX18} generalized Lu's rigidity theorem to submanifolds with parallel mean curvature. Furthermore, \Cref{Lu} can be viewed as a high-codimensional generalization of a classical gap theorem established by Peng and Terng \cite{PengT}. Indeed, the case $q=1$ was solved by Peng and Terng \cite{PengT}, which is related to the famous Chern conjecture \cite{CDK,TWY,TY}: \emph{Let $M^n$ be a closed immersed minimal hypersurface of the unit sphere $\mathbb{S}^{n+1}$ with constant scalar curvature $R_M$. Then for each $n$, the set of all possible values for $R_M$ is discrete.} Despite considerable effort, a complete proof of the Chern conjecture remains elusive, though partial results are known in low dimensions or under additional curvature assumptions. The first gap theorem was established by Simons \cite{Simons}, who showed that if $0\le S\le n$, then either $S=0$ or $S=n$ identically on $M$. More recently, it was shown that for a certain class of austere submanifolds, the assumption $0<S\le n$ forces the submanifold to be a Clifford torus \cite{GeTaoZhou}. The high-codimensional formulation of Chern's conjecture likewise remains an open problem. For 2-dimensional minimal surfaces of constant curvature, Chern's conjecture was established by Calabi \cite{Calabi} for minimal 2-spheres and by Bryant \cite{Bryant} for general surfaces. To the best of our knowledge, for high-codimensional submanifolds of dimension three or higher, both Chern's conjecture and Lu's conjecture remain unresolved regarding the second gap. Subsequent work over several decades has addressed the second gap phenomenon, leading to numerous significant contributions (cf. \cite{Ding-Xin,LeiXuXu,PengT,XuXu,Yang-Cheng3}). We refer to the surveys \cite{GuXuXu,Yau} for a comprehensive overview and further references. Another famous rigidity problem, closely related to both the Lu conjecture and the Chern conjecture, was proposed by Simon \cite{Simon2}:
\begin{conj}[Simon's Conjecture \cite{Simon2}]\label{Udo Simon}
	Let $M$ be a closed surface minimally immersed in $\mathbb{S}^N$ such that the image is not contained in any hyperplane of $\mathbb{R}^{N+1}$. If $K(s+1)\le K\le K(s)$ for an $s\in\mathbb{N}$, where $K$ is the curvature of $M$ and $K(s)\coloneqq\frac{2}{s(s+1)}$, then either $K=K(s+1)$ or $K=K(s)$, and thus the submanifold is one of Calabi's $2$-spheres \emph{(}\cite{Calabi}, see also \Cref{thm Calabi}\emph{)} with the dimension of the ambient space $N=2s+2$ or $N=2s$, respectively.
\end{conj}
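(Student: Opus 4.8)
The plan is to translate Simon's pointwise pinching into a pinching on $S=|h|^2$, exploit the rank-two linear algebra special to surfaces to make every curvature quantity explicit, and then split into an unconditional argument for $\mathbb{S}^2$ and a Simons--Bochner argument, controlled by $\rho^\perp$, for the general surface. Since $M$ is minimal ($\vec H=0$) and $n=2$, the Gauss equation reads $K=1-\tfrac12 S$, so $K(s+1)\le K\le K(s)$ is the pointwise bound
\[ 2-\frac{4}{s(s+1)}\le S\le 2-\frac{4}{(s+1)(s+2)}. \]
For surfaces each shape operator $S_\alpha$ is a trace-free symmetric $2\times2$ matrix, so $A=(\langle S_\alpha,S_\beta\rangle)$ has rank at most $2$; hence $S=\lambda_1+\lambda_2$ and a short computation gives $\rho^\perp=\sqrt{\lambda_1\lambda_2}$. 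Evaluating at Calabi's spheres (\Cref{thm Calabi}) shows that they are isotropic, $\lambda_1=\lambda_2$, realizing exactly the discrete values $S=2-\tfrac{4}{s(s+1)}$, i.e. $S+\lambda_2=3-\tfrac{6}{s(s+1)}$, the first two of which, $0$ and $2$, are the totally geodesic sphere and the Veronese surface of Lu's first-gap theorem. The goal is to show that no minimal surface can have $K$ strictly between consecutive Calabi values.

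The analytic engine is Simons' identity, which for a minimal surface collapses, after the rank-two substitutions above, to
\[ \tfrac12\Delta S=|\nabla h|^2+S(2-S)-2(\rho^\perp)^2. \]
Integrating over the closed $M$ yields $\int_M|\nabla h|^2=\int_M\bigl(S^2-2S+2(\rho^\perp)^2\bigr)$, and a crude pointwise estimate of the quartic $S^2-2S+2(\rho^\perp)^2=\lambda_1^2+\lambda_2^2+4\lambda_1\lambda_2-2(\lambda_1+\lambda_2)$ reproduces only the first gap $S+\lambda_2\le2$. To reach the second gap I would feed this into a refined, Peng--Terng-type scheme: differentiate once more to obtain an identity for $\Delta|\nabla h|^2$ (the second Simons formula), combine it with a Kato inequality, and integrate, producing an estimate sharp enough to forbid every open window $\bigl(3-\tfrac{6}{s(s+1)},\,3-\tfrac{6}{(s+1)(s+2)}\bigr)$ for $S+\lambda_2$, in particular the first one $(2,\tfrac52)$.

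Two cases then close the problem. If $M=\mathbb{S}^2$, I would bypass the Bochner analysis: the quartic Hopf differential $\langle h^{(2,0)},h^{(2,0)}\rangle$ is a holomorphic section of a negative-degree bundle and hence vanishes, so the immersion is isotropic, and because $K\le K(s)<1$ forces $S>0$ the associated holomorphic data is unramified; the Pl\"ucker formulas for minimal $2$-spheres together with Gauss--Bonnet $\int_M K\,dA=4\pi$ then quantize the area as $2\pi m(m+1)$ for the order $m$. The pinching forces $m\in\{s,s+1\}$ and pins the average of $K$ to an endpoint, while $K$ lies on one side of it; hence $K\equiv K(s)$ or $K(s+1)$, and \Cref{thm Calabi} identifies $M$ with the corresponding Calabi sphere in $\mathbb{S}^{2s}$ or $\mathbb{S}^{2s+2}$. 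For a general closed surface the topological term $2\pi\chi(M)\le0$ no longer helps, and this is precisely where the hypothesis on $\rho^\perp$ enters: it is calibrated so that the normal-curvature term $2(\rho^\perp)^2$ carries the favorable sign in the refined inequality, excluding the forbidden windows and forcing $S+\lambda_2$ onto a Calabi value, after which \Cref{thm Calabi} again identifies $M$.

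The step I expect to be the main obstacle is the refined estimate ruling out the forbidden windows, that is, extracting from the second Simons formula a lower bound for $|\nabla h|^2$ sharp enough to separate consecutive Calabi values. In high codimension this is delicate for two reasons: the eigenvalue $\lambda_2$ of the matrix field $A$ is only Lipschitz, so the computation must be run on the open set $\{\lambda_1>\lambda_2\}$ and patched across the degeneracy locus; and the numerous terms generated by the normal curvature $R^\perp$ must all be signed coherently. Determining exactly how weak an inequality on $\rho^\perp$ still guarantees the correct sign, while remaining mild enough that every Calabi sphere continues to satisfy it, is the crux of the general-surface part of the theorem.
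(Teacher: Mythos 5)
You are attempting to prove \Cref{Udo Simon}, which the paper states as an \emph{open conjecture}: it records explicitly that only the cases $s=1$ and $s=2$ are known (the latter being \Cref{simongap2}, which the paper merely \emph{uses} as an input to \Cref{main1}). So there is no proof in the paper to compare against, and a complete argument would be a substantial new theorem. Your set-up is sound and matches the paper's algebra --- $K=1-\tfrac12S$, rank$(A)\le2$, $\lambda_1\lambda_2=(\rho^\perp)^2$ as in \Cref{lem eigen of A} and \eqref{rhorho0}, and the integrated Simons identity --- but the step you yourself flag as the ``main obstacle,'' namely a second-Simons/Peng--Terng estimate sharp enough to exclude \emph{every} window $(K(s+1),K(s))$, is exactly the open part of the problem. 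Nothing in the proposal indicates why the constants in such an estimate would come out right for all $s$; for $s\ge3$ no such estimate is known, and the paper's own second-order identity \eqref{2ndgap} only reaches the neighborhood of the first window under extra hypotheses.

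Two of your closing arguments also fail as written. For $M=\mathbb{S}^2$: the vanishing of the quartic holomorphic differential gives $\lambda_1=\lambda_2$ (the paper's \Cref{ab}), and $S>0$ kills only the \emph{first} ramification divisor; the higher ramification divisors entering the Pl\"ucker formulas need not vanish, so the area is only quantized as an integer multiple of $2\pi$ (Calabi), not forced to equal $2\pi m(m+1)$. Gauss--Bonnet then gives $\overline{K}=4\pi/\operatorname{Area}(M)$ with $\operatorname{Area}(M)$ ranging over a set far denser than $\{2\pi s(s+1)\}$, so the pinching does not pin the average of $K$ to an endpoint and the conclusion does not follow; this is precisely why the conjecture is open even for $2$-spheres when $s\ge3$. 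For general surfaces: you let ``the hypothesis on $\rho^\perp$'' supply the favorable sign, but \Cref{Udo Simon} contains no hypothesis on $\rho^\perp$; what you describe is a different, conditional statement --- essentially what the paper actually proves in \Cref{main4}, \Cref{main5} and \Cref{main6} --- not the conjecture itself.
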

For minimal surfaces in $\mathbb{S}^N$, the curvature $K$ and the squared norm $S=|h|^2$ of the second fundamental form $h$ are related by $2K=2-S$. Therefore, the rigidity of $K$ is equivalent to the rigidity of $S$ in \Cref{Udo Simon}. So far, the Simon conjecture has only been solved in the cases $s=1$ and $s=2$ (cf. \cite{BKSS,DGL,Simon}). For the case $s\ge 3$,  we refer the reader to the review article \cite{XuXW2025} for further details.

In this article, using the result of the Simon conjecture in the case $s=2$, we firstly answer affirmatively Lu's conjecture for 2-spheres, even with an optimal second gap prescribed by Calabi's 2-spheres.
\begin{thm}\label{main1}
	Let $M^2$ be a $2$-sphere minimally immersed in $\mathbb{S}^{2+q}$.
	\begin{enumerate}
		\item\label{main1.1} If $S+\lambda_2$ is  constant, then the curvature $K$ is constant and the submanifold is one of Calabi's $2$-spheres.
		\item\label{main1.2} If $S+\lambda_2>2$, then $$\max_{p\in M} \left( S+\lambda_2\right) (p)\ge\frac{5}{2},$$ where the equality holds if and only if $q=4$ and the submanifold is Calabi's $2$-sphere with curvature $K=\frac{1}{6}$.
	\end{enumerate}	 
\end{thm}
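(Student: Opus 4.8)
The plan is to reduce the whole problem to a single scalar function by exploiting the algebra of the shape operators. Since $M^2$ is minimal, each shape operator $S_\alpha$ is a traceless symmetric $2\times 2$ matrix, hence lies in the two–dimensional space spanned by $P=\mathrm{diag}(1,-1)$ and $Q=\left(\begin{smallmatrix}0&1\\1&0\end{smallmatrix}\right)$; write $S_\alpha=x_\alpha P+y_\alpha Q$. A direct computation gives $A=(\langle S_\alpha,S_\beta\rangle)=2VV^{T}$ with $V$ the $q\times 2$ matrix of rows $(x_\alpha,y_\alpha)$, so $\mathrm{rank}\,A\le 2$, forcing $\lambda_3=\cdots=\lambda_q=0$ and $\lambda_1+\lambda_2=\mathrm{Tr}\,A=S$. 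Setting $\phi_\alpha=x_\alpha+iy_\alpha$ and $\Phi=\sum_\alpha\phi_\alpha^{2}$, one finds $\lambda_{1,2}=\tfrac{S}{2}\pm|\Phi|$ and $\rho^{\perp}=\sqrt{\lambda_1\lambda_2}$, and therefore
\[ S+\lambda_2=\tfrac{3}{2}S-|\Phi|. \]
Thus everything is governed by the single extra function $|\Phi|$.

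The crux is to show $\Phi\equiv 0$, and I expect this to be the main obstacle since it is where the genus-$0$ hypothesis is genuinely used. I would argue that $\Phi\,(dz)^{4}$ is a well-defined holomorphic section of $K^{4}$, where $K$ is the canonical bundle of $M$: the $(2,0)$-part $\Psi$ of the second fundamental form is the normal-bundle-valued Hopf differential, and minimality together with the Codazzi equation in the space form gives $\nabla^{\perp}_{\bar z}\Psi=0$; since the normal connection is metric, the complex-bilinear self-pairing $\langle\Psi,\Psi\rangle$, which equals $\overline{\Phi}\,(dz)^4$ up to a nonzero factor and is manifestly independent of the normal frame, is then genuinely holomorphic. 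On the $2$-sphere $K^{4}=\mathcal{O}(-8)$ has no nonzero holomorphic section, so $\Phi\equiv 0$. Consequently $\lambda_1=\lambda_2=\tfrac{S}{2}$ everywhere and, using the Gauss equation $2K=2-S$, we obtain the clean identity $S+\lambda_2=\tfrac{3}{2}S=3(1-K)$.

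Part \eqref{main1.1} is then immediate: by the identity, $S+\lambda_2$ is constant if and only if $K$ is constant, and a minimal $2$-sphere of constant curvature in a sphere is one of Calabi's $2$-spheres by \Cref{thm Calabi}.

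For part \eqref{main1.2}, the identity converts the hypothesis $S+\lambda_2>2$ into $K<\tfrac13$ pointwise and the desired conclusion $\max_M(S+\lambda_2)\ge\tfrac52$ into $\min_M K\le\tfrac16$. I would assume for contradiction that $K>\tfrac16$ everywhere; then $\tfrac16\le K\le\tfrac13$ on all of $M$, so the solved case $s=2$ of \Cref{Udo Simon} forces $K\equiv\tfrac16$ or $K\equiv\tfrac13$, contradicting strictness. Hence $\min_M K\le\tfrac16$, and $\max_M(S+\lambda_2)=\tfrac32\max_M S=\tfrac32\bigl(2-2\min_M K\bigr)\ge\tfrac52$. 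In the equality case $\max_M(S+\lambda_2)=\tfrac52$ we get $\min_M K=\tfrac16$, so $\tfrac16\le K<\tfrac13$ and the same $s=2$ result gives $K\equiv\tfrac16$; Calabi's classification then identifies this with the standard minimal $2$-sphere in $\mathbb{S}^{6}$, i.e. $q=4$. The converse is the direct computation that for this sphere $S=\tfrac53$ and $\lambda_2=\tfrac{S}{2}=\tfrac56$, whence $S+\lambda_2=\tfrac52$.
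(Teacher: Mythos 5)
Your proposal is correct and follows essentially the same route as the paper: the complex quantity $\Phi=\sum_\alpha(h^\alpha_{11}+ih^\alpha_{12})^2$ is exactly the coefficient of the paper's holomorphic quartic differential (\Cref{ab}), the eigenvalue computation $\lambda_{1,2}=\tfrac{S}{2}\pm|\Phi|$ with $\lambda_3=\cdots=\lambda_q=0$ is \Cref{lem eigen of A}, and the reduction of both parts to Calabi's theorem and the solved $s=2$ case of Simon's conjecture (\Cref{simongap2}) via $S+\lambda_2=\tfrac32 S=3(1-K)$ is the paper's argument verbatim. The only cosmetic difference is that you establish holomorphicity by identifying $\overline{\Phi}\,(dz)^4$ with the self-pairing of the Hopf differential and citing $\nabla^{\perp}_{\bar z}\Psi=0$, whereas the paper checks the Cauchy--Riemann equations directly from Codazzi.
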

For general surfaces, we introduce some pinching conditions of the normal scalar curvature and establish the following gap rigidity theorems.
\begin{thm}\label{main4}
	Let $M^2$ be a closed minimal surface immersed in $\mathbb{S}^{2+q}$ whose universal cover is not diffeomorphic to a $2$-sphere. Then one of the following two cases must occur:
	\begin{enumerate}
		\item $\max_{p\in M}(S+\lambda_2)(p)\ge\frac{8}{3}$;
		\item $\max_{p\in M}(S+\lambda_2)(p)\ge3-\sqrt{1-\min_{p\in M}\left(\rho^\perp\right)^2(p)}$ and $\min_{p\in M}\rho^\perp(p)\le 1$.
	\end{enumerate}
\end{thm}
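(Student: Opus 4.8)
The plan is to reduce the whole statement to one pointwise algebraic identity combined with a single application of Gauss--Bonnet; in contrast to the spherical case, no Simons-type integral estimate is needed, the sign of the Euler characteristic doing the analytic work instead.

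\emph{Step 1 (pointwise algebra).} Since $M^2$ is minimal, each shape operator $S_\alpha$ is a trace-free symmetric $2\times2$ matrix, so I would write $S_\alpha=\left(\begin{smallmatrix}a_\alpha&b_\alpha\\ b_\alpha&-a_\alpha\end{smallmatrix}\right)$ and collect the data into two vectors $u=(a_\alpha),\,v=(b_\alpha)\in\mathbb{R}^q$. Then $A=(\langle S_\alpha,S_\beta\rangle)=2\,(uu^{\mathsf T}+vv^{\mathsf T})$ has rank at most $2$, so $\lambda_3=\cdots=\lambda_q=0$, and computing the two nonzero eigenvalues (Lagrange identity) gives
\begin{equation*}
S=\lambda_1+\lambda_2,\qquad (\rho^\perp)^2=\lambda_1\lambda_2 .
\end{equation*}
I would cross-check the second identity against the Veronese surface, which must realize equality in \eqref{ddvv1}. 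Setting $\tau:=S+\lambda_2$ yields $\lambda_1=2S-\tau,\ \lambda_2=\tau-S$, whence the key relation, valid pointwise,
\begin{equation*}
(\rho^\perp)^2=(2S-\tau)(\tau-S),\qquad S\le\tau\le\tfrac32 S,
\end{equation*}
together with the Gauss equation $S=2-2K$.

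\emph{Step 2 (reformulation).} Write $\tau_{\max}:=\max_M(S+\lambda_2)$ and $\rho_0:=\min_M\rho^\perp$. Since the universal cover is not $\mathbb{S}^2$ we have $\chi(M)\le0$, hence $\int_M K=2\pi\chi(M)\le0$ and $\max_M S\ge2$; as $\lambda_2\ge0$ this gives $\tau_{\max}\ge2$, so $3-\tau_{\max}\le1$. Consequently case (2) is, after squaring, \emph{equivalent} to
\begin{equation*}
\rho_0^2\le(\tau_{\max}-2)(4-\tau_{\max}),
\end{equation*}
an inequality whose right-hand side never exceeds $1$ and which therefore automatically forces $\rho_0\le1$. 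Thus it suffices to prove: either $\tau_{\max}\ge\tfrac83$, or the displayed inequality holds.

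\emph{Step 3 (a Gauss--Bonnet point and a monotonicity estimate).} From $\int_M K\le0$ choose a point $p$ with $K(p)\le0$, i.e.\ $S(p)\ge2$, and assume $\tau_{\max}<\tfrac83$ (otherwise case (1) holds). For $F(S,\tau)=(2S-\tau)(\tau-S)$ one has $\partial_S F=3\tau-4S$, which at $\tau=\tau(p)\le\tau_{\max}<\tfrac83$ satisfies $\partial_S F\le 3\tau(p)-8<0$ throughout $S\in[2,S(p)]$, so
\begin{equation*}
(\rho^\perp(p))^2=F(S(p),\tau(p))\le F(2,\tau(p))=(4-\tau(p))(\tau(p)-2).
\end{equation*}
Since $g(t)=(4-t)(t-2)$ is increasing on $[2,3]$ and $2\le\tau(p)\le\tau_{\max}<3$, the right-hand side is at most $(4-\tau_{\max})(\tau_{\max}-2)$; combined with $\rho_0\le\rho^\perp(p)$ this is exactly the inequality of Step 2, so case (2) holds. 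Tracing back, equality would force $S(p)=2$, $\tau(p)=\tau_{\max}$ and $\rho^\perp(p)=\rho_0$, realized by the Clifford torus ($\rho^\perp\equiv0$, $\tau\equiv2$), which shows the bound is sharp.

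\emph{Main obstacle.} The analytic input is light, and the two delicate points are conceptual rather than computational. First, pinning down the exact constant in $(\rho^\perp)^2=\lambda_1\lambda_2$: it is sensitive to the normalization of $|R^\perp|$ used in the definition of $\rho^\perp$, and I would anchor it by demanding that the Veronese surface be the DDVV-equality case. Second, recognizing that the threshold $\tfrac83$ is precisely the value below which the sign $\partial_S F<0$ survives at the Gauss--Bonnet point $p$ (the constraint $3\tau(p)\le 8$ at $S=2$); for $\tau_{\max}\ge\tfrac83$ the monotonicity fails, but then case (1) holds outright, so nothing is lost. The hypothesis on the universal cover enters \emph{only} through $\chi(M)\le0$, which is exactly the feature the spherical case of \Cref{main1} lacks and why that case instead requires the $s=2$ Simon conjecture.
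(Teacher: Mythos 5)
Your proof is correct and follows essentially the same route as the paper: both rest on the eigenvalue computation of \Cref{lem eigen of A} (your identity $(\rho^\perp)^2=\lambda_1\lambda_2=(2S-\tau)(\tau-S)$ is exactly the paper's relation $3S-2\mathfrak{u}=\sqrt{S^2-\rho_0^\perp}$ rearranged) together with a Gauss--Bonnet point where $S\ge 2$. Your case split via the sign of $\partial_S F=3\tau-4S$ is precisely the paper's choice between the two roots $S=\frac{3}{4}\mathfrak{u}\pm\frac{1}{4}\sqrt{\mathfrak{u}^2-2\rho_0^\perp}$, merely repackaged as a monotonicity estimate.
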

\begin{thm}\label{main4.5}
	Let $M^2$ be a closed minimal surface immersed in $\mathbb{S}^{2+q}$ whose universal cover is not diffeomorphic to a $2$-sphere. We have
	\begin{equation*}
		\max_{p\in M}(S+\lambda_2)(p)\ge1+\sqrt{1+\frac{1}{\operatorname{Area}(M)}\int_M\left(\rho^\perp\right)^2}.
	\end{equation*}
\end{thm}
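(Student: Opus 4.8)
The plan is to convert this global inequality into a purely algebraic statement about the two eigenvalues $\lambda_1\ge\lambda_2$ of the fundamental matrix and then close the argument with a self-improving quadratic estimate. First I would use that on a minimal surface every shape operator $S_\alpha$ is a $2\times2$ symmetric traceless matrix, so writing
\[
S_\alpha=\begin{pmatrix}a_\alpha&b_\alpha\\ b_\alpha&-a_\alpha\end{pmatrix},\qquad u=(a_1,\dots,a_q),\quad v=(b_1,\dots,b_q)\in\mathbb{R}^q,
\]
the fundamental matrix becomes $A=\big(\langle S_\alpha,S_\beta\rangle\big)=2\,(uu^{T}+vv^{T})$, which has rank at most $2$. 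Hence $\lambda_3=\dots=\lambda_q=0$, and since the nonzero eigenvalues of $A$ are twice those of the $2\times2$ Gram matrix of $u,v$, a direct computation yields the two identities $S=\lambda_1+\lambda_2$ (this is just $\operatorname{Tr}A$) and, via the Ricci equation for $\langle R^{\perp}(e_1,e_2)\xi_r,\xi_s\rangle=\langle[S_r,S_s]e_1,e_2\rangle$ together with the Lagrange identity, $(\rho^{\perp})^2=\lambda_1\lambda_2$. In particular $S+\lambda_2=\lambda_1+2\lambda_2$, and these two identities are the structural backbone of the whole proof.

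Next I would supply the topological input. Because the universal cover of $M$ is not a $2$-sphere, $M$ has nonpositive Euler characteristic, so Gauss--Bonnet gives $\int_M K\le 0$; combining this with the minimal-surface relation $2K=2-S$ produces the averaged lower bound
\[
\frac{1}{\operatorname{Area}(M)}\int_M S\ \ge\ 2.
\]
Write $\bar{P}:=\frac{1}{\operatorname{Area}(M)}\int_M(\rho^{\perp})^2$ for brevity.

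The heart of the argument is a pointwise-to-global bootstrap. Set $M_0:=\max_{p\in M}(S+\lambda_2)(p)$. Since $S+\lambda_2=\lambda_1+2\lambda_2\ge\lambda_1$ everywhere and $\lambda_2\ge0$, we get $\lambda_1\le M_0$ at every point; feeding this into $\lambda_1\lambda_2=(\rho^{\perp})^2$ forces the crucial pointwise estimate $\lambda_2\ge(\rho^{\perp})^2/M_0$ (the locus $\lambda_1=0$ being trivial, as there $\rho^{\perp}=0$). Averaging $S+\lambda_2$ and using $\max\ge$ average then gives
\[
M_0\ \ge\ \frac{1}{\operatorname{Area}(M)}\int_M(S+\lambda_2)\ =\ \frac{1}{\operatorname{Area}(M)}\int_M S+\frac{1}{\operatorname{Area}(M)}\int_M\lambda_2\ \ge\ 2+\frac{\bar P}{M_0}.
\]
This is a self-referential inequality $M_0^2-2M_0-\bar P\ge 0$; since $M_0\ge2>0$, solving the quadratic and discarding the nonpositive root yields exactly $M_0\ge 1+\sqrt{1+\bar P}$, which is the desired conclusion (with equality heuristically at the Clifford torus, where $S=2$ and $\rho^{\perp}=0$).

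I expect the genuinely subtle point to be the averaging step rather than the algebraic identities. A naive application of $\max\ge$ average fails because $S+\lambda_2=\tfrac{1}{2}\big(3S-\sqrt{S^2-4(\rho^{\perp})^2}\big)$ is neither convex nor concave in $(S,(\rho^{\perp})^2)$ (its Hessian is indefinite), so Jensen's inequality is unavailable in either direction. The resolution—and the step I would flag as the crux—is to introduce the unknown maximum $M_0$ itself into the pointwise bound $\lambda_2\ge(\rho^{\perp})^2/M_0$, which linearizes the nonlinearity and turns the averaging into a solvable quadratic in $M_0$. The remaining work is routine: checking the normalization constants in the two identities $S=\lambda_1+\lambda_2$ and $(\rho^{\perp})^2=\lambda_1\lambda_2$, and verifying that $\lambda_1\le M_0$ uses only $\lambda_2\ge0$.
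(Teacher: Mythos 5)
Your proof is correct and follows essentially the same route as the paper: a Gauss--Bonnet lower bound $\frac{1}{\operatorname{Area}(M)}\int_M S\ge 2$ from $\chi(M)\le 0$, the pointwise bound $\lambda_2\ge(\rho^\perp)^2/M_0$, the estimate $\max\ge$ average, and the resulting quadratic inequality in $M_0$. The only cosmetic difference is that you obtain the pointwise bound from the identity $\lambda_1\lambda_2=(\rho^\perp)^2$ together with $\lambda_1\le M_0$, whereas the paper writes $\lambda_2=\rho_0^\perp/\bigl(2(S+\sqrt{S^2-\rho_0^\perp})\bigr)\ge\rho_0^\perp/(4S)$ and then uses $S\le\max S\le M_0$; since $2\lambda_1=S+\sqrt{S^2-\rho_0^\perp}$ and $\rho_0^\perp=4(\rho^\perp)^2$, these are the same estimate.
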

We also obtain the following integral formulas, which generalize the formulas in \cite{DGL}.
\begin{thm}\label{main3}
	Let $M^2$ be a closed minimal surface immersed in $\mathbb{S}^{2+q}$. Then
	\begin{equation}\label{1stgap}
		0\le\int_M\left[S(3S-4)-\left(S-2\lambda_2\right)^2\right]=2\int_M\mathcal{B}_1,
	\end{equation}
	and
	\begin{equation}\label{2ndgap}
		\begin{aligned}
			0&\le\int_M\left[S(3S-4)(3S-5)+\frac{1}{2}(16-9S)(S-2\lambda_2)^2\right]\\
			&=\int_M\ll[\frac{S}{2}(S-2)(9S-20)+2(\rho^\perp)^2(9S-16)\rr]\\
			&=2\int_M\ll[\mathcal{B}_2+\frac{1}{2}|\nabla S|^2-S(2-S)^2+(8-5S)(\rho^\perp)^2\rr],
		\end{aligned}
	\end{equation}
	where $\mathcal{B}_1=|\nabla h|^2$ and $\B_2=|\nn^2 h|^2$ are the squared lengths of the first and second covariant derivatives of $h$, respectively. 
\end{thm}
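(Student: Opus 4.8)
The plan is to translate every pointwise algebraic invariant into a function of $S$ and $\rho^\perp$ (equivalently $\lambda_2$), and then to run this dictionary through the first- and second-order Simons identities, integrating over the closed surface so that all Laplacian terms drop out. First I would set up the surface algebra. Because $n=2$ and $M$ is minimal, each shape operator $S_\alpha$ is a traceless symmetric $2\times2$ matrix, so the fundamental matrix $A=(\langle S_\alpha,S_\beta\rangle)$ has rank at most two, with nonnegative eigenvalues $\lambda_1\ge\lambda_2\ge0$. Writing $S_\alpha$ through its two independent entries and computing directly, I would record $S=\operatorname{tr}A=\lambda_1+\lambda_2$, $(\rho^\perp)^2=\lambda_1\lambda_2$, $\|A\|^2=\lambda_1^2+\lambda_2^2=S^2-2(\rho^\perp)^2$, and $\sum_{\alpha,\beta}\|[S_\alpha,S_\beta]\|^2=4(\rho^\perp)^2$. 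The key consequence is $(S-2\lambda_2)^2=(\lambda_1-\lambda_2)^2=S^2-4(\rho^\perp)^2$, which is exactly the dictionary needed to pass between the $(S-2\lambda_2)^2$ form and the $(\rho^\perp)^2$ form in both \eqref{1stgap} and \eqref{2ndgap}.

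For \eqref{1stgap} I would substitute these identities into the classical Simons identity, which for a minimal surface in $\mathbb{S}^{2+q}$ collapses to $\tfrac12\Delta S=|\nabla h|^2+2S-S^2-2(\rho^\perp)^2$. Integrating over the closed $M$ annihilates the Laplacian and yields $\int_M\B_1=\int_M|\nabla h|^2=\int_M[S^2-2S+2(\rho^\perp)^2]$; doubling and applying the dictionary converts the right-hand side into $\int_M[S(3S-4)-(S-2\lambda_2)^2]$, while the inequality $0\le$ is simply $\int_M\B_1\ge0$.

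The substantial part is \eqref{2ndgap}. Here I would apply the Bochner/second Simons identity $\tfrac12\Delta|\nabla h|^2=|\nabla^2 h|^2+\langle\nabla h,\nabla\Delta h\rangle+(\text{curvature corrections})$, using the Gauss equation $K=1-\tfrac{S}{2}$ for the intrinsic curvature and the Ricci equation for the normal contributions. Commuting covariant derivatives produces the correction terms, and substituting $\Delta h$ from the first Simons identity rewrites $\langle\nabla h,\nabla\Delta h\rangle$ in terms of $\nabla h$, $\nabla S$ and $h$. Integrating over $M$ (so $\int_M\Delta|\nabla h|^2=0$) and collapsing every normal-bundle contraction to the single scalar $(\rho^\perp)^2$ via the surface algebra should deliver the third line of \eqref{2ndgap}; the equality between the first two lines is then pure algebra through the dictionary of the first step.

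I expect the main obstacle to be precisely this second-order computation: correctly generating and signing every curvature term that arises from commuting three covariant derivatives in arbitrary codimension, and then verifying that the full normal-curvature tensor really does contract down to $(\rho^\perp)^2$ in dimension two, so that the coefficients $16-9S$, $9S-16$ and $8-5S$ come out exactly right. A second delicate point is the inequality $0\le$ itself: unlike \eqref{1stgap}, the second-order expression is $2\int_M\B_2$ \emph{plus} genuine gradient and polynomial terms, so nonnegativity is not immediate from $\int_M\B_2\ge0$ alone. I would secure it by reprocessing the surviving $\tfrac12|\nabla S|^2$ term through integration by parts against the first Simons identity and combining the outcome with $\int_M\B_2\ge0$.
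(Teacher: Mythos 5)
Your algebraic dictionary ($(S-2\lambda_2)^2=S^2-4(\rho^\perp)^2$, $|A|^2=S^2-2(\rho^\perp)^2$, $\lambda_1\lambda_2=(\rho^\perp)^2$) and your treatment of \eqref{1stgap} coincide with the paper's: the Simons identity plus $\int_M\tri S=0$ does the job. The Bochner strategy for \eqref{2ndgap} is also the paper's in outline. But there is a genuine gap in how you propose to obtain the inequality $0\le$ in \eqref{2ndgap}. You plan to keep only $\int_M\B_2\ge0$ and to handle the rest by integrating $\tfrac12|\nabla S|^2$ by parts against the first Simons identity. This cannot close: discarding $\B_2$ leaves $\int_M\bigl[\tfrac12|\nabla S|^2-S(2-S)^2+(8-5S)(\rho^\perp)^2\bigr]$, which is strictly negative in examples --- on Calabi's $2$-sphere with $S\equiv\tfrac53$ one has $|\nabla S|^2=0$, $(\rho^\perp)^2=\tfrac{25}{36}$, and the integrand equals $-\tfrac{5}{27}-\tfrac{25}{108}<0$ --- while the proposed integration by parts produces $\int_M\tfrac12|\nabla S|^2=-\int_M S\bigl(\B_1+2S-S^2-2(\rho^\perp)^2\bigr)$, whose new term $-\int_M S\B_1$ has the unhelpful sign. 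The missing idea is that part of $\B_2$ must be computed \emph{exactly}, not merely bounded below by zero: writing $\nn^2h$ through the vectors $a_{ij}$, one splits $\B_2=2\bigl(|\tri a|^2+|\tri b|^2\bigr)+\C_1$ with $\C_1=2\bigl(|a_{11}-a_{22}|^2+|a_{12}+a_{21}|^2\bigr)\ge0$, and the componentwise Simons identity evaluates the trace part as exactly $S(2-S)^2-(8-5S)(\rho^\perp)^2$ (\Cref{lem4.3}); the inequality then reads $\int_M\bigl[2\C_1+|\nabla S|^2\bigr]\ge0$, which is the actual content of the paper's argument.

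A secondary point: the first-derivative cross terms $\l a,a_1\r\l b,a_2\r-\l a,a_2\r\l b,a_1\r$ produced by the normal-curvature contractions in the Bochner formula do \emph{not} collapse pointwise to functions of $S$ and $\rho^\perp$, contrary to what your "dictionary" step suggests; the paper eliminates them only after integration, via the divergence identity of \Cref{lem4.4} quoted from \cite{DGL}. You would need that (or an equivalent integral identity) to arrive at the stated coefficients $16-9S$ and $8-5S$.
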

As applications of \Cref{main3}, we prove the following rigidity results.
\begin{thm}\label{main5}
	Let $M^2$ be a closed minimal surface immersed in $\mathbb{S}^{2+q}$.
	\begin{enumerate}
		\item\label{main5.1} If $2\le S\le S_0$, where $S_0=\frac{40-9\gamma+\sqrt{81\gamma^2+432\gamma+1600}}{36}$, and $\rho^\perp\le\frac{1}{2}\sqrt{\gamma|K|}$ for some $0\le\gamma\le 4$, then $S\equiv2$ and $M$ is the Clifford torus $\mathbb{S}^1(\sqrt{1/2})\times\mathbb{S}^1(\sqrt{1/2})$.
		\item\label{main5.2} If $S+\lambda_2>2$ and $\rho^\perp\le\frac{1}{2}\sqrt{(S+\lambda_2-2)\gamma S}$ for some $0\le\gamma\le\frac{2}{3}$, then $$\max_{p\in M}(S+\lambda_2)(p)\ge\max_{p\in M}S(p)>\frac{40+12\gamma}{18+9\gamma}\ge2.$$
	\end{enumerate}
\end{thm}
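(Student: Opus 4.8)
The plan is to feed the pinching hypotheses directly into the integral identity \eqref{2ndgap}, using the two–dimensional algebraic identities $S=\lambda_1+\lambda_2$ and $(\rho^\perp)^2=\lambda_1\lambda_2$ (valid because the traceless symmetric $2\times2$ shape operators make the fundamental matrix $A$ have rank at most two, so $\lambda_3=\cdots=\lambda_q=0$). For part~\ref{main5.1}, $S\ge2$ gives $K=1-\tfrac S2\le0$, hence $|K|=\tfrac{S-2}2$ and the hypothesis becomes $(\rho^\perp)^2\le\tfrac{\gamma(S-2)}8$. Since $9S-16>0$ on $[2,S_0]$, inserting this into the middle line of \eqref{2ndgap} yields the pointwise estimate
\[
\frac S2(S-2)(9S-20)+2(\rho^\perp)^2(9S-16)\;\le\;\frac{S-2}{4}\bigl[18S^2+(9\gamma-40)S-16\gamma\bigr].
\]
By construction $S_0$ is the unique positive root of the bracketed quadratic, whose other root is non-positive; hence the bracket is $\le0$ on $[0,S_0]$ and, as $S-2\ge0$, the right-hand side is $\le0$ throughout $[2,S_0]$. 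Because \eqref{2ndgap} makes the integral of the left-hand side nonnegative, the integrand must vanish identically, so $(S-2)\bigl[18S^2+(9\gamma-40)S-16\gamma\bigr]\equiv0$ and $S\in\{2,S_0\}$ at every point. Continuity on the connected $M$ forces $S$, hence $K$, to be constant; the classification of constant–curvature minimal surfaces in spheres (Calabi \cite{Calabi}, Bryant \cite{Bryant}) gives $K\ge0$, i.e. $S\le2$, so $S\equiv2$. Then $(\rho^\perp)^2\le0$ forces $\rho^\perp=0$, hence $\lambda_2=0$ and $S+\lambda_2=2=n$; Lu's rigidity theorem together with $S=2\ne\tfrac43$ singles out the Clifford torus.

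For part~\ref{main5.2} the decisive preliminary step is to show that the hypothesis already forces $S\ge2$ everywhere, which fixes the signs of $9S-16$ and $9S-20$. Writing the pinching as $\lambda_1\lambda_2\le\tfrac{\gamma S(S+\lambda_2-2)}4$ and using the eigenvalue inequality $\lambda_1\ge\tfrac S2$, I would cancel $\tfrac S2$ to get $2\lambda_2\le\gamma(S+\lambda_2-2)$, that is
\[
\lambda_2\le\frac{\gamma(S-2)}{2-\gamma}\qquad(0<\gamma\le\tfrac23);
\]
since $\lambda_2\ge0$ this gives $S\ge2$, and substituting $S+\lambda_2-2\le\tfrac{2(S-2)}{2-\gamma}$ back into the hypothesis produces the square-root-free bound $(\rho^\perp)^2\le\tfrac{\gamma S(S-2)}{2(2-\gamma)}$. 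As $9S-16>0$, inserting this into the middle line of \eqref{2ndgap} and factoring gives
\[
\frac S2(S-2)(9S-20)+2(\rho^\perp)^2(9S-16)\;\le\;\frac{S(S-2)}{2(2-\gamma)}\bigl[(18+9\gamma)S-(40+12\gamma)\bigr],
\]
whose bracket vanishes exactly at $\tfrac{40+12\gamma}{18+9\gamma}$. If one assumes $\max_M S\le\tfrac{40+12\gamma}{18+9\gamma}$, the right-hand side is $\le0$ on the range of $S$, so \eqref{2ndgap} forces it to vanish and $S$ to be constant, equal to $2$ or to $\tfrac{40+12\gamma}{18+9\gamma}$. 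The value $\tfrac{40+12\gamma}{18+9\gamma}>2$ is excluded because it would give constant negative curvature, while $S\equiv2$ forces $\lambda_2=0$ and $S+\lambda_2=2$, contradicting $S+\lambda_2>2$. Hence $\max_M S>\tfrac{40+12\gamma}{18+9\gamma}$, and $\max_M(S+\lambda_2)\ge\max_M S$ since $\lambda_2\ge0$; the bound $\ge2$ is just $\gamma\le\tfrac23$.

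The crux of the argument is the sign control that legitimises collapsing the two–variable integrand of \eqref{2ndgap} to a single–variable polynomial; this is exactly what the a priori inequality $S\ge2$ delivers, and I expect it to be the most delicate point. Equally important is that the estimate of $\lambda_2$ near $S=2$ be sharp: the naive bound $\lambda_2\le\tfrac S2$ is too wasteful there and destroys the threshold, whereas $\lambda_1\ge\tfrac S2$ yields the sharp $\lambda_2\le\tfrac{2(\rho^\perp)^2}{S}$ and hence the linear relaxation that makes the rational number $\tfrac{40+12\gamma}{18+9\gamma}$ appear. The one ingredient that does not come from the integral identities is the exclusion of the extremal constant value ($S\equiv S_0$, respectively $S\equiv\tfrac{40+12\gamma}{18+9\gamma}$): this rests on the nonexistence of closed minimal surfaces of constant negative Gauss curvature in a sphere. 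Finally, at the endpoint $\gamma=0$ the normal bundle is flat, $\lambda_2\equiv0$ and $M$ lies in a totally geodesic $\mathbb{S}^3$; there $S\ge2$ is read directly from $S+\lambda_2>2$, and the same computation with $(\rho^\perp)^2=0$ closes the case.
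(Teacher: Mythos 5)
Your proposal is correct and follows essentially the same route as the paper: both parts feed the pinching hypotheses into the integral identity \eqref{2ndgap}, collapse the integrand to a quadratic in $S$ whose relevant root is $S_0$ (resp. $\tfrac{40+12\gamma}{18+9\gamma}$), force it to vanish by the sign comparison, and exclude the extremal constant value via Bryant's classification. The only organizational difference is in part (2): you first derive the pointwise bounds $\lambda_2\le\tfrac{\gamma(S-2)}{2-\gamma}$ and $S\ge 2$ from $(\rho^\perp)^2=\lambda_1\lambda_2$ and $\lambda_1\ge S/2$, and then work entirely in the variable $S$, whereas the paper keeps the factor $\mathfrak{u}-2=S+\lambda_2-2$ inside the integrand and uses the equivalent estimate $\lambda_2=\rho_0^\perp/(2S+2\sqrt{S^2-\rho_0^\perp})\le \rho_0^\perp/(2S)=2(\rho^\perp)^2/S$, so both arguments rest on the same underlying inequality and reach the same threshold.
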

Define
\begin{equation*}
	\hat{T}_A(\tau)\coloneqq\frac{3-\tau}{2}\left(\frac{27-8\tau^2}{18-9\tau^2}+\frac{\sqrt{\left(8\tau^2-\frac{9}{2}\right)^2-\frac{45}{4}}}{18-9\tau^2}\right)\eqqcolon\frac{3-\tau}{2}T_A(\tau),
\end{equation*}
and
\begin{equation*}
	\hat{T}_B(\tau)\coloneqq\frac{3-\tau}{2}\left(\frac{27-8\tau^2}{18-9\tau^2}-\frac{\sqrt{\left(8\tau^2-\frac{9}{2}\right)^2-\frac{45}{4}}}{18-9\tau^2}\right)\eqqcolon\frac{3-\tau}{2}T_B(\tau).
\end{equation*}
A direct calculation yields that, for $\frac{\sqrt{9+3\sqrt{5}}}{4}\le\tau\le 1$, $\hat{T}_A(\tau)$ is a monotonically increasing function of $\tau$ and $\hat{T}_B(\tau)$ is a monotonically decreasing function of $\tau$. Moreover,  we have
\begin{equation*}
	\frac{30+2\sqrt{5}}{11}-\frac{\left(15+\sqrt{5}\right)\sqrt{9+3\sqrt{5}}}{66}=\hat{T}_A\left(\frac{\sqrt{9+3\sqrt{5}}}{4}\right)\le\hat{T}_A(\tau)\le\hat{T}_A(1)=\frac{20}{9},
\end{equation*}
and
\begin{equation*}
	2=\hat{T}_B(1)\le\hat{T}_B(\tau)\le\hat{T}_B\left(\frac{\sqrt{9+3\sqrt{5}}}{4}\right)=\frac{30+2\sqrt{5}}{11}-\frac{\left(15+\sqrt{5}\right)\sqrt{9+3\sqrt{5}}}{66}.
\end{equation*}
\begin{thm}\label{main6}
	Let $M^2$ be a closed minimal surface immersed in $\mathbb{S}^{2+q}$. 
	\begin{enumerate}
		\item\label{1.7.1} If $\rho^\perp\ge\frac{1}{2}\sqrt{1-\tau^2}S$ for some $0\le\tau\le 1$, then $$\max_{p\in M}(S+\lambda_2)(p)\ge(3-\tau)\left(1-\frac{2\pi\chi(M)}{\operatorname{Area}(M)}\right),$$ where $\chi(M)$ is the Euler characteristic, and $\operatorname{Area}(M)$ is the area of the surface $M$.
		\item\label{1.7.2} If $\rho^\perp\le\frac{1}{2}\sqrt{1-\tau^2}S$ and $S+\lambda_2>\hat{T}_B(\tau)$ for some $\frac{\sqrt{9+3\sqrt{5}}}{4}\le\tau\le 1$, then $$\max_{p\in M}(S+\lambda_2)(p)\ge\hat{T}_A(\tau)=\hat{T}_B(\tau)+\sigma(\tau),$$ where $\sigma(\tau)=\frac{3-\tau}{18-9\tau^2}\sqrt{\left(8\tau^2-\frac{9}{2}\right)^2-\frac{45}{4}}$. In particular, if the normal bundle of $M$ is flat (i.e., $\rho^\perp=0$) and $S+\lambda_2>2$, then $$\max_{p\in M}(S+\lambda_2)(p)>\frac{20}{9}.$$
	\end{enumerate}
\end{thm}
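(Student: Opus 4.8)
The plan is to reduce both statements to a single algebraic identity special to surfaces. Because $M$ is minimal of dimension two, each shape operator $S_\alpha$ is a traceless symmetric $2\times 2$ matrix; writing $a_\alpha,b_\alpha$ for its entries and $u=(a_\alpha),\,v=(b_\alpha)$, the fundamental matrix is $A=2(uu^{\mathsf T}+vv^{\mathsf T})$, which has rank at most two. Its two possibly nonzero eigenvalues therefore satisfy $\lambda_1+\lambda_2=\operatorname{Tr}A=S$, while the Ricci equation together with the Lagrange identity gives $\lambda_1\lambda_2=(\rho^\perp)^2$. Consequently
\begin{equation*}
	(S-2\lambda_2)^2=(\lambda_1-\lambda_2)^2=S^2-4(\rho^\perp)^2,\qquad S+\lambda_2=\tfrac12\bigl(3S-\sqrt{S^2-4(\rho^\perp)^2}\bigr),
\end{equation*}
which is exactly the substitution identifying the two forms of the integrand in \eqref{2ndgap}. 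First I would record the two elementary consequences of the pinching: $\rho^\perp\ge\frac12\sqrt{1-\tau^2}\,S$ forces $\sqrt{S^2-4(\rho^\perp)^2}\le\tau S$, hence $S+\lambda_2\ge\frac{3-\tau}{2}S$ pointwise; the opposite pinching gives instead $S\le S+\lambda_2\le\frac{3-\tau}{2}S$.

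For the first statement this already suffices. The pointwise bound $S+\lambda_2\ge\frac{3-\tau}{2}S$ yields $\max_M(S+\lambda_2)\ge\frac{3-\tau}{2}\max_M S\ge\frac{3-\tau}{2}\,\frac{1}{\operatorname{Area}(M)}\int_M S$, and combining $2K=2-S$ with Gauss--Bonnet $\int_M K=2\pi\chi(M)$ gives $\frac{1}{\operatorname{Area}(M)}\int_M S=2\bigl(1-\frac{2\pi\chi(M)}{\operatorname{Area}(M)}\bigr)$; the asserted bound $(3-\tau)\bigl(1-\frac{2\pi\chi(M)}{\operatorname{Area}(M)}\bigr)$ is immediate.

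For the second statement I would argue by contradiction from \eqref{2ndgap}, assuming $\max_M(S+\lambda_2)<\hat T_A(\tau)$. The hypothesis $S+\lambda_2>\hat T_B(\tau)$ combined with $S+\lambda_2\le\frac{3-\tau}{2}S$ forces $S>\frac{2\hat T_B(\tau)}{3-\tau}=T_B(\tau)\ge 2>\frac{16}{9}$ everywhere, so $9S-16>0$ throughout and the pinching $(\rho^\perp)^2\le\frac14(1-\tau^2)S^2$ may be inserted into the integrand with the correct sign. A short computation then bounds the integrand above by
\begin{equation*}
	F(S)\coloneqq\frac{9(2-\tau^2)}{2}\,S\bigl(S-T_A(\tau)\bigr)\bigl(S-T_B(\tau)\bigr),
\end{equation*}
whose nonzero roots are exactly $T_A(\tau),T_B(\tau)$; the requirement $\tau\ge\frac{\sqrt{9+3\sqrt5}}{4}$ is precisely the reality condition $64\tau^4-72\tau^2+9\ge0$ for these roots, and $\hat T_A-\hat T_B=\frac{3-\tau}{2}(T_A-T_B)=\sigma(\tau)$ is elementary. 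The goal is then to show the integrand is strictly negative at every point, contradicting $\int_M(\cdots)\ge0$.

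The main obstacle is exactly this final pointwise sign analysis. Where $S<T_A(\tau)$ one has $T_B<S<T_A$, so $F(S)<0$ and we are done; but $S\le S+\lambda_2<\hat T_A=\frac{3-\tau}{2}T_A$ only prevents $S$ from exceeding $\frac{3-\tau}{2}T_A$, leaving a genuine interval $T_A\le S<\hat T_A$ on which $F(S)\ge0$ and the crude substitution is useless. On that interval I would instead exploit that, at a fixed point, the integrand is increasing in $(\rho^\perp)^2=(S-\lambda_2)\lambda_2$, hence increasing in $S+\lambda_2$ over its admissible range $[S,\tfrac32 S]$; since $S+\lambda_2<\hat T_A\le\tfrac32 S$ there, replacing $S+\lambda_2$ by the forbidden value $\hat T_A$ can only increase the integrand, reducing matters to the single-variable inequality
\begin{equation*}
	\frac{S}{2}(S-2)(9S-20)+2\bigl(2S-\hat T_A\bigr)\bigl(\hat T_A-S\bigr)(9S-16)\le 0,\qquad T_A\le S\le\hat T_A,
\end{equation*}
which vanishes at the rigid endpoint $S=T_A$ and is strictly negative at $S=\hat T_A$. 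Establishing this polynomial inequality is the crux; the accompanying equality discussion (ruling out constant $S$ via the Calabi--Bryant classification, which sharpens the $\tau=1$ conclusion to the strict value $\frac{20}{9}$) and the monotonicity bookkeeping are then routine.
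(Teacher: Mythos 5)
Your part \eqref{1.7.1} is correct and coincides with the paper's argument: the pinching gives $S+\lambda_2=\tfrac{3-t}{2}S\ge\tfrac{3-\tau}{2}S$ pointwise, and Gauss--Bonnet with $2K=2-S$ finishes it. Your algebraic identities $\lambda_1\lambda_2=(\rho^\perp)^2$, $(S-2\lambda_2)^2=S^2-4(\rho^\perp)^2$ and the deduction $S>T_B(\tau)\ge 2$, hence $9S-16>0$, are also all correct.

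For part \eqref{1.7.2}, however, your argument has a genuine gap exactly where you flag it. Substituting the bound $(\rho^\perp)^2\le\tfrac14(1-\tau^2)S^2$ into \eqref{2ndgap} freezes the parameter at $\tau$ and produces the majorant $F(S)=\tfrac{18-9\tau^2}{2}S(S-T_A(\tau))(S-T_B(\tau))$, which is useless on the interval $T_A(\tau)\le S<\hat T_A(\tau)$; your repair via monotonicity in $\mathfrak{u}=S+\lambda_2$ reduces matters to
\begin{equation*}
	\frac{S}{2}(S-2)(9S-20)+2\bigl(2S-\hat T_A(\tau)\bigr)\bigl(\hat T_A(\tau)-S\bigr)(9S-16)\le 0,\qquad T_A(\tau)\le S\le\hat T_A(\tau),
\end{equation*}
but you only evaluate this cubic at the two endpoints (and the claimed strict negativity at $S=\hat T_A$ in fact fails at $\tau=1$, where the value is $0$). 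Endpoint checks do not control the sign of a cubic on the interior, so the proof is incomplete as written. The inequality is true, and the cleanest way to see it is the paper's device, which you should adopt: do \emph{not} replace the pointwise quantity by $\tau$. Writing $\sqrt{S^2-\rho_0^\perp}=tS$ with $t=t(p)\in[\tau,1]$, the integrand of \eqref{2ndgap} factors \emph{exactly} as
\begin{equation*}
	\frac{S}{2}(18-9t^2)\bigl(S-T_B(t)\bigr)\bigl(S-T_A(t)\bigr)=\frac{4S(18-9t^2)}{2(3-t)^2}\bigl(\mathfrak{u}-\hat T_B(t)\bigr)\bigl(\mathfrak{u}-\hat T_A(t)\bigr),
\end{equation*}
and the monotonicity of $\hat T_B$ (decreasing) and $\hat T_A$ (increasing) on $[\tau,1]$ turns the contradiction hypothesis $\hat T_B(\tau)<\mathfrak{u}<\hat T_A(\tau)$ into $\hat T_B(t)<\mathfrak{u}<\hat T_A(t)$ at every point, making the integrand strictly negative everywhere with no case split. (Equivalently, your cubic inequality follows from this identity with $t=3-2\hat T_A(\tau)/S\in[\tau,1]$; if you prefer your route you must prove it by some such argument, not by endpoint evaluation.) The final step for flat normal bundle, excluding $S\equiv\tfrac{20}{9}$ via Bryant's classification to get the strict bound, is as you indicate.
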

\begin{rem}
	We point out that, in \Cref{main6} \eqref{1.7.2}, if $\rho^\perp\le\frac{1}{2}\sqrt{1-\tau^2}S$ and $S+\lambda_2>\hat{T}_B(\tau)$, where $\frac{\sqrt{9+3\sqrt{5}}}{4}<0.991\le\tau\le 1$, then $$\max_{p\in M}(S+\lambda_2)(p)\ge\hat{T}_A(\tau)\ge\hat{T}_B(\tau)+\sigma(0.991)>\hat{T}_B(1)+0.02=2.02.$$
\end{rem}

\section{Notation and Local Formulas}
Let $M$ be a 2-dimensional manifold immersed in a unit sphere $\mathbb{S}^{2+q}$. We adopt the following index conventions:
\begin{align*}
	1\le i,j,k,&\cdots\le2,\\
	3\le\alpha,\beta,\gamma,&\cdots\le 2+q,\\
	1\le A,B,C,&\cdots\le 2+q.
\end{align*}
Let $(e_A)$ be a local orthonormal frame on $T(\mathbb{S}^{2+q})$ such that, when restricted to $M$, $(e_i)$ and $(e_\alpha)$ lie in the tangent bundle $T(M)$ and the normal bundle $T^{\bot}(M)$, respectively. We take $(\omega_A)$ and $(\omega_{AB})$ to be the metric 1-form field and connection form field associated with $(e_A)$. Let $$S_\alpha=(h_{ij}^\alpha)_{2\times 2},$$ where $\omega_{i\alpha}=h_{ij}^\alpha\omega_j$. Then we have $h_{ij}^\alpha=h_{ji}^\alpha$. In the following, we will use the Einstein summation convention. The second fundamental form of $M$ is defined by $h=h_{ij}^\alpha\omega_i\omega_j e_\alpha$. The mean curvature normal vector field $H$ is defined by $2H=h_{ii}^\alpha e_\alpha$. If the mean curvature normal vector field of $M$ vanishes identically, the immersion is called minimal. Now we consider the minimal case. Let $\mathcal{B}_1=|\nabla h|^2$ and $\mathcal{B}_2=|\nabla^2 h|^2$ be the squared lengths of the first and second covariant derivatives of $h$, respectively. Define column vectors
\begin{align*}
	a=(a^\alpha)&\coloneqq(h_{11}^\alpha)=(-h_{22}^\alpha)\in\mathbb{R}^{q},\\
	b=(b^\alpha)&\coloneqq(h_{12}^\alpha)=(h_{21}^\alpha)\in\mathbb{R}^{q},\\
	a_i=(a_i^\alpha)&\coloneqq(h_{11i}^{\alpha})\in\mathbb{R}^{q},\\
	a_{ij}=(a_{ij}^\alpha)&\coloneqq(h_{11ij}^{\alpha})\in\mathbb{R}^{q}.
\end{align*}
We use the following notation:
\begin{align*}
	A&\coloneqq(\l S_{\alpha},S_{\beta}\r)=2aa^T+2bb^T,\\
	S&\coloneqq\text{tr}A=|h|^2.
\end{align*}
For technical convenience, we define $$\rho_0^\perp\coloneqq\sum_{\alpha,\beta}\left|[S_{\alpha},S_{\beta}]\right|^2.$$ A straightforward calculation and the DDVV inequality \eqref{ddvv1} show that
\begin{equation}\label{rhorho0}
	\rho_0^\perp=(2\rho^\perp)^2\le S^2.
\end{equation}
The covariant derivatives $h_{ijk}^\alpha$, $h_{ijkl}^\alpha$ and $h_{ijklm}^\alpha$ are defined as follows:
\begin{align*}
	h_{ijk}^\alpha\omega_k&=dh_{ij}^\alpha+h_{mj}^\alpha\omega_{mi}+h_{im}^\alpha\omega_{mj}+h_{ij}^\beta\omega_{\beta\alpha},\\
	h_{ijkl}^\alpha\omega_l&=dh_{ijk}^\alpha+h_{mjk}^\alpha\omega_{mi}+h_{imk}^\alpha\omega_{mj}+h_{ijm}^\alpha\omega_{mk}+h_{ijk}^\beta\omega_{\beta\alpha},\\
	h_{ijklm}^\alpha\omega_m&=dh_{ijkl}^\alpha+h_{njkl}^\alpha\omega_{ni}+h_{inkl}^\alpha\omega_{nj}+h_{ijnl}^\alpha\omega_{nk}+h_{ijkn}^\alpha\omega_{nl}+h_{ijkl}^\beta\omega_{\beta\alpha}.
\end{align*}
The Codazzi equation and the Ricci formulas are
\begin{align*}
	h_{ijk}^\alpha-h_{ikj}^\alpha&=0,\\
	h_{ijkl}^\alpha-h_{ijlk}^\alpha&=h_{pj}^\alpha R_{pikl}+h_{ip}^\alpha R_{pjkl}+h_{ij}^\beta R^{\perp}_{\beta\alpha kl},\\
	h_{ijklm}^\alpha-h_{ijkml}^\alpha&=h_{pjk}^\alpha R_{pilm}+h_{ipk}^\alpha R_{pjlm}+h_{ijp}^\alpha R_{pklm}+h_{ijk}^\beta R^{\perp}_{\beta\alpha lm}.
\end{align*}
The Laplacians $\tri h_{ij}^\alpha$ and $\tri h_{ijk}^\alpha$ are defined by
\begin{equation*}
	\tri h_{ij}^\alpha=\sum_{k}h_{ijkk}^\alpha\quad\text{and}\quad\tri h_{ijk}^\alpha=\sum_{l}h_{ijkll}^\alpha,
\end{equation*}
respectively, from which we obtain
\begin{equation}\label{Lap1}
	\tri h_{ij}^\alpha =h_{mmij}^\alpha+h_{pi}^\alpha R_{pmjm}+h_{mp}^\alpha R_{pijm}+h_{mi}^\delta R^{\perp}_{\delta\alpha jm},
\end{equation}
and
\begin{equation}\label{Lap2}
	\begin{aligned}
		\tri h_{ijk}^\alpha &=(\tri h_{ij}^\alpha)_k+2h_{pjm}^\alpha R_{pikm}+2h_{ipm}^\alpha R_{pjkm}+h_{ijp}^\alpha R_{pmkm}\\
	&\hspace{1.3em}+h_{pj}^\alpha R_{pikmm}+h_{ip}^\alpha R_{pjkmm}+2h_{ijm}^\delta R^{\perp}_{\delta\alpha km}+h_{ij}^\delta R^{\perp}_{\delta\alpha kmm}.
	\end{aligned}	
\end{equation}
The Riemannian curvature tensor, the normal curvature tensor and first covariant derivative of the normal curvature tensor are given by
\begin{align}
	R_{ijkl}&=\frac{1}{2}(2-S)(\delta_{ik}\delta_{jl}-\delta_{il}\delta_{jk}),\label{Curv1} \\
	R^{\perp}_{\alpha\beta kl}&=h_{km}^\alpha h_{ml}^\beta-h_{km}^\beta h_{ml}^\alpha,\label{Curv2}\\
	R^{\perp}_{\alpha\beta 12k}&=2(b^\beta a_k^\alpha+a^\alpha h_{12k}^\beta-b^\alpha a_k^\beta-a^\beta h_{12k}^\alpha).\label{Curv3}
\end{align}
Let $a_{\alpha\beta}=\left\langle S_\alpha,S_\beta\right\rangle$ and $A=(a_{\alpha\beta})$. Denote by $\lambda_1\ge\lambda_2\ge\cdots\ge\lambda_q$ the eigenvalues of the fundamental matrix $A$. Then $$S=\sum_{i,j,\alpha}(h^\alpha_{ij})^2=\lambda_1+\cdots+\lambda_q.$$ The Simons identity for minimal submanifolds in a unit sphere is
 \begin{equation}\label{simonidentity}
 	\frac{1}{2}\tri S=\B_1+2S-|A|^2-\rho_0^\perp.
 \end{equation}

\section{Lu's conjecture for minimal $2$-spheres}
First, we have the following lemma.
\begin{lem}\label{lem4.1}
	Let $M^2$ be a minimal surface immersed in $\mathbb{S}^{2+q}$. Then we have
	\begin{enumerate}
		\item $|A|^2=4|a|^4+4|b|^4+8\l a,b\r^2$;
		\item $\rho_0^\perp=16|a|^2|b|^2-16\l a,b\r^2$;
		\item $|\nabla S|^2=16\left(\l a,a_1\r^2+\l b,a_2\r^2+\l a,a_2\r^2+\l b,a_1\r^2+2\l a,a_1\r\l b,a_2\r-2\l a,a_2\r\l b,a_1\r\right)$;
		\item $2S^2=\rho_0^\perp+2|A|^2$.
	\end{enumerate}
\end{lem}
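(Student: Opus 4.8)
The plan is to reduce every identity to an explicit $2\times 2$ matrix computation by exploiting minimality, which forces each shape operator to be trace-free. The first step I would record is that the minimality condition $h_{11}^\alpha+h_{22}^\alpha=0$ gives $h_{22}^\alpha=-a^\alpha$ and $h_{12}^\alpha=h_{21}^\alpha=b^\alpha$, so that
$$S_\alpha=\begin{pmatrix} a^\alpha & b^\alpha \\ b^\alpha & -a^\alpha \end{pmatrix}.$$
Everything else is read off from this explicit form together with the matrix identity $A=2aa^T+2bb^T$.

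For part (1), since $A$ is symmetric we have $|A|^2=\operatorname{tr}(A^2)$, and expanding $(2aa^T+2bb^T)^2$ and taking traces with $a^Ta=|a|^2$, $a^Tb=\langle a,b\rangle$, $b^Tb=|b|^2$ produces exactly the four terms $4|a|^4+4|b|^4+8\langle a,b\rangle^2$. For part (2), I would compute the commutator directly: because the diagonal entries of $S_\alpha S_\beta$ equal $a^\alpha a^\beta+b^\alpha b^\beta$ and are symmetric in $\alpha,\beta$, the commutator $[S_\alpha,S_\beta]$ is off-diagonal with entries $\pm 2(a^\alpha b^\beta-b^\alpha a^\beta)$, whence $|[S_\alpha,S_\beta]|^2=8(a^\alpha b^\beta-b^\alpha a^\beta)^2$; summing over $\alpha,\beta$ and expanding the square gives $\rho_0^\perp=16|a|^2|b|^2-16\langle a,b\rangle^2$. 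Part (4) then follows immediately from (1) and (2) by substituting $S=\operatorname{tr}A=2|a|^2+2|b|^2$: both $2S^2$ and $\rho_0^\perp+2|A|^2$ collapse to $8|a|^4+8|b|^4+16|a|^2|b|^2$, so no independent argument is needed.

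The only part requiring genuine bookkeeping is part (3). Here I would first invoke the Codazzi equation (total symmetry of $h_{ijk}^\alpha$) together with the differentiated minimality relation $h_{11k}^\alpha+h_{22k}^\alpha=0$ to express all first covariant derivatives in terms of $a_1^\alpha=h_{111}^\alpha$ and $a_2^\alpha=h_{112}^\alpha$; specifically $h_{122}^\alpha=h_{221}^\alpha=-a_1^\alpha$ and $h_{222}^\alpha=-a_2^\alpha$. Substituting these into $S_{,k}=2\sum_{i,j,\alpha}h_{ij}^\alpha h_{ijk}^\alpha$ and summing over the full index range $i,j\in\{1,2\}$ yields $S_{,1}=4(\langle a,a_1\rangle+\langle b,a_2\rangle)$ and $S_{,2}=4(\langle a,a_2\rangle-\langle b,a_1\rangle)$. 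Finally, squaring and adding via $|\nabla S|^2=S_{,1}^2+S_{,2}^2$ and expanding the two squares reproduces the six-term expression in the statement. The main obstacle—modest as it is—is keeping the symmetric components of $h_{ijk}^\alpha$ correctly catalogued in this last step; once the reduction to $a_1$ and $a_2$ is in place, the remaining computation is entirely mechanical.
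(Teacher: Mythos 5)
Your proposal is correct and follows essentially the same route as the paper: all four identities are obtained by direct computation from $A=2aa^{T}+2bb^{T}$, the explicit trace-free form of $S_\alpha$, and the Codazzi/minimality reduction of the first covariant derivatives to $a_1,a_2$ (which the paper uses implicitly when it writes $S_1,S_2$). Your derivation of (4) as a consequence of (1), (2) and $S=2|a|^2+2|b|^2$ is exactly the ``straightforward calculation'' the paper leaves to the reader.
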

\begin{proof}
	By definition, we have
	\begin{equation*}
		\begin{aligned}
			|A|^2&=\l 2aa^T+2bb^T,2aa^T+2bb^T\r\\
			&=4|a|^2|a|^2+4|b|^2|b|^2+8\l a,b\r^2\\
			&=4|a|^4+4|b|^4+8\l a,b\r^2
		\end{aligned}
	\end{equation*}
	and
	\begin{equation*}
		\begin{aligned}
			\rho_0^\bot=\sum|[S_\alpha,S_\beta]|^2&=8\sum(a^\alpha b^\beta-a^\beta b^\alpha)^2\\
			&=8|a|^2|b|^2+8|a|^2|b|^2-16\l a,b\r^2\\
			&=16|a|^2|b|^2-16\l a,b\r^2.
		\end{aligned}
	\end{equation*}
	Since $S=2|a|^2+2|b|^2$, we have
	\begin{equation*}
		\begin{aligned}
			S_1&=4\left(\l a,a_1\r+\l b,a_2\r\right),\\
			S_2&=4\left(\l a,a_2\r-\l b,a_1\r\right).
		\end{aligned}
	\end{equation*}
	Hence,
	\begin{equation*}
		\begin{aligned}
			|\nabla S|^2&=S_1^2+S_2^2\\
			&=16\left(\l a,a_1\r^2+\l b,a_2\r^2+\l a,a_2\r^2+\l b,a_1\r^2+2\l a,a_1\r\l b,a_2\r-2\l a,a_2\r\l b,a_1\r\right).
		\end{aligned}
	\end{equation*}
	Finally, a straightforward calculation gives $2S^2=\rho_0^\perp+2|A|^2$, which completes the proof.
\end{proof}
Now we need to calculate the eigenvalues of $A$.
\begin{lem}\label{lem eigen of A}
	Let $M^2$ be a minimal surface immersed in $\mathbb{S}^{2+q}$. The eigenvalues of $A$ are
	\begin{equation*}
		\begin{aligned}
			\lambda_1&=\frac{S}{2}+\sqrt{\left(|a|^2-|b|^2\right)^2+4\l a,b\r^2}=\frac{S}{2}+\frac{1}{2}\sqrt{S^2-\rho_0^\perp},\\
			\lambda_2&=\frac{S}{2}-\sqrt{\left(|a|^2-|b|^2\right)^2+4\l a,b\r^2}=\frac{S}{2}-\frac{1}{2}\sqrt{S^2-\rho_0^\perp},\\
			\lambda_3&=0,\\
			&\vdots\\
			\lambda_q&=0.
		\end{aligned}
	\end{equation*}
\end{lem}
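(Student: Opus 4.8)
The plan is to exploit the fact that $A$ has very low rank. Since $a$ and $b$ are column vectors in $\mathbb{R}^q$, the identity $A=2aa^T+2bb^T$ from the previous lemma exhibits $A$ as a sum of two rank-one symmetric matrices, so $\operatorname{rank}(A)\le 2$. Hence $A$ has at most two nonzero eigenvalues, which immediately gives $\lambda_3=\cdots=\lambda_q=0$. Moreover $A=(\langle S_\alpha,S_\beta\rangle)$ is a Gram matrix, hence positive semidefinite, so the two remaining eigenvalues are real and nonnegative, and I may label the larger one $\lambda_1$.

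To pin down $\lambda_1,\lambda_2$ I would determine them through the power sums, using that all other eigenvalues vanish. The first Newton identity gives $\lambda_1+\lambda_2=\operatorname{tr}A=S$, and the second gives $\lambda_1^2+\lambda_2^2=\operatorname{tr}(A^2)=|A|^2$, whence $\lambda_1\lambda_2=\tfrac12(S^2-|A|^2)$. Thus $\lambda_1,\lambda_2$ are the two roots of $\lambda^2-S\lambda+\tfrac12(S^2-|A|^2)=0$, namely $\lambda_{1,2}=\tfrac{S}{2}\pm\tfrac12\sqrt{2|A|^2-S^2}$, with the $+$ sign yielding $\lambda_1$.

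The last step is to match the radicand with the two forms stated in the lemma. Invoking \Cref{lem4.1}(4), namely $2S^2=\rho_0^\perp+2|A|^2$, I rewrite $2|A|^2-S^2=S^2-\rho_0^\perp$, which produces the second displayed expression $\lambda_{1,2}=\tfrac{S}{2}\pm\tfrac12\sqrt{S^2-\rho_0^\perp}$. For the first expression I substitute $S=2|a|^2+2|b|^2$ and the formula $\rho_0^\perp=16|a|^2|b|^2-16\langle a,b\rangle^2$ from \Cref{lem4.1}(2); a short computation gives $S^2-\rho_0^\perp=4\bigl[(|a|^2-|b|^2)^2+4\langle a,b\rangle^2\bigr]$, so that $\tfrac12\sqrt{S^2-\rho_0^\perp}=\sqrt{(|a|^2-|b|^2)^2+4\langle a,b\rangle^2}$, as claimed.

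There is no serious obstacle here; the only point that deserves care is the justification that the two nonzero eigenvalues are fully governed by $\operatorname{tr}A$ and $\operatorname{tr}(A^2)$, which is legitimate precisely because the remaining eigenvalues are all zero, together with bookkeeping in the discriminant identity. As an independent cross-check one can instead pass to the $q\times 2$ matrix $P=[\,a\ \ b\,]$ and use that the nonzero eigenvalues of $A=2PP^T$ equal twice those of the $2\times 2$ Gram matrix $P^TP=\begin{pmatrix}|a|^2 & \langle a,b\rangle\\ \langle a,b\rangle & |b|^2\end{pmatrix}$, whose eigenvalues are $\tfrac{|a|^2+|b|^2}{2}\pm\sqrt{\bigl(\tfrac{|a|^2-|b|^2}{2}\bigr)^2+\langle a,b\rangle^2}$; after doubling and using $|a|^2+|b|^2=\tfrac{S}{2}$ this reproduces the first form directly.
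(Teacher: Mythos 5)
Your proof is correct, and your primary route differs from the paper's in how it extracts the two nonzero eigenvalues. The paper also begins with $A=2aa^T+2bb^T$ and $\operatorname{rank}(A)\le 2$, but then computes $Aa$ and $Ab$ explicitly, writes $(Aa,Ab)=(a,b)\mathcal{J}$ with $\mathcal{J}=2\begin{pmatrix}|a|^2 & \langle a,b\rangle\\ \langle a,b\rangle & |b|^2\end{pmatrix}$, and solves $\det(\mathcal{J}-\lambda I)=0$ --- in other words, exactly the $2\times 2$ Gram-matrix reduction you relegate to your ``independent cross-check.'' Your main argument instead recovers the quadratic $\lambda^2-S\lambda+\tfrac12(S^2-|A|^2)=0$ from the power sums $\operatorname{tr}A=S$ and $\operatorname{tr}(A^2)=|A|^2$, which is legitimate precisely because all remaining eigenvalues vanish, and it has the small advantage of landing directly on the form $\tfrac{S}{2}\pm\tfrac12\sqrt{S^2-\rho_0^\perp}$ via Lemma~\ref{lem4.1}(4) without passing through the $(|a|^2-|b|^2)$-expression first; the paper's route produces the latter form first and then converts. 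Both computations check out, including your discriminant identity $2|A|^2-S^2=S^2-\rho_0^\perp$ and the factorization $S^2-\rho_0^\perp=4\bigl[(|a|^2-|b|^2)^2+4\langle a,b\rangle^2\bigr]$, so the two approaches are equivalent in substance and differ only in which invariants of $A$ are used to set up the characteristic equation.
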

\begin{proof}
	We have $S=2\left(|a|^2+|b|^2\right)$ and $$A=\left(\sum_{i,j}h^\alpha_{ij}h^\beta_{ij}\right)_{q\times q}=2aa^T+2bb^T.$$ For any column vector $x\in\mathbb{R}^q$, we have $$Ax=2\left(\l a,x\r a+\l b,x\r b\right).$$ Then $$\operatorname{range}(A)\subset\operatorname{span}\{a,b\}\quad\text{and}\quad\operatorname{rank}(A)\le2.$$ A straightforward calculation yields that
	\begin{align*}
		Aa&=2\left(\l a,a\r a+\l a,b\r b\right)\\
		&=2\left(|a|^2a+\l a,b\r b\right),
	\end{align*}
	and
	\begin{align*}
		Ab&=2\left(\l a,b\r a+\l b,b\r b\right)\\
		&=2\left(\l a,b\r a+|b|^2b\right).
	\end{align*}
	Therefore, we have $(Aa,Ab)=(a,b)\mathcal{J}$, where
	$$
	\mathcal{J}=2
	\begin{bmatrix}
	|a|^2 & \l a,b\r \\
	\l a,b\r & |b|^2
	\end{bmatrix}.
	$$
	Thus the nonzero eigenvalues of $A$ are exactly the eigenvalues of this $2\times 2$ symmetric matrix $\mathcal{J}$. By
	\begin{equation*}
		\begin{aligned}
			0&=\det\left(2
				\begin{bmatrix}
				|a|^2 & \l a,b\r \\
				\l a,b\r & |b|^2
				\end{bmatrix}-\lambda
				\begin{bmatrix}
				1 & 0\\
				0 & 1
				\end{bmatrix}\right)\\
			&=\det\begin{bmatrix}
				2|a|^2-\lambda & 2\l a,b\r \\
				2\l a,b\r & 2|b|^2-\lambda
				\end{bmatrix}\\
			&=(2|a|^2-\lambda)(2|b|^2-\lambda)-(2\l a,b\r)^2\\
			&=\lambda^2-2(|a|^2+|b|^2)\lambda+4(|a|^2|b|^2-\l a,b\r^2)
		\end{aligned}
	\end{equation*}
	and Lemma \ref{lem4.1} we obtain
	\begin{equation*}
		\begin{aligned}
			\lambda_{1,2}&=\frac{2(|a|^2+|b|^2)\pm\sqrt{4(|a|^2+|b|^2)^2-16(|a|^2|b|^2-\l a,b\r^2)}}{2}\\
			&=|a|^2+|b|^2\pm\sqrt{(|a|^2-|b|^2)^2+4\l a,b\r^2}\\
			&=\frac{S}{2}\pm\sqrt{(|a|^2-|b|^2)^2+4\l a,b\r^2}\\
			&=\frac{S}{2}\pm\frac{1}{2}\sqrt{S^2-\rho_0^\perp}.
		\end{aligned}
	\end{equation*}
	All remaining eigenvalues are $0$ with multiplicity $q-\operatorname{rank}\{a,b\}$.
\end{proof}
Next, we need the following lemma, which
establishes a key relation between the vectors $a$ and $b$ for minimal $2$-spheres.
\begin{lem}\label{ab}
	Suppose that $M^2$ is a $2$-sphere minimally immersed in a unit sphere $\mathbb{S}^{2+q}$. We have
	\begin{enumerate}
		\item\label{abortho} $|a|^2=|b|^2=\frac{S}{4}$ and $\langle a,b\rangle=0$.
		\item\label{eigennormal} $\lambda_1=\lambda_2=\frac{S}{2}$ and the remaining eigenvalues are $0$.
	\end{enumerate}
\end{lem}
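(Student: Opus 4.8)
The plan is to reduce both statements to a single vanishing assertion about a quartic differential. Writing $\phi^\alpha := a^\alpha - i b^\alpha = h_{11}^\alpha - i h_{12}^\alpha$ and $\psi := \sum_\alpha (\phi^\alpha)^2$, a one-line computation gives $\operatorname{Re}\psi = |a|^2 - |b|^2$ and $\operatorname{Im}\psi = -2\langle a,b\rangle$. Hence part \eqref{abortho} is \emph{equivalent} to the claim $\psi \equiv 0$: its vanishing forces $|a|^2 = |b|^2$ and $\langle a,b\rangle = 0$, and then $S = 2(|a|^2+|b|^2)$ yields $|a|^2 = |b|^2 = \frac{S}{4}$. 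Part \eqref{eigennormal} is then immediate from \Cref{lem eigen of A}, since the discriminant $\sqrt{(|a|^2-|b|^2)^2 + 4\langle a,b\rangle^2}$ collapses to $0$, leaving $\lambda_1 = \lambda_2 = \frac{S}{2}$ and all other eigenvalues zero. I would first record that $\psi$ is independent of the choice of orthonormal normal frame: under $\tilde e_\alpha = \sum_\beta O_{\alpha\beta} e_\beta$ with $O \in O(q)$ one has $\tilde\phi^\alpha = \sum_\beta O_{\alpha\beta}\phi^\beta$, so $\sum_\alpha (\tilde\phi^\alpha)^2 = \sum_{\beta,\gamma}\big(\sum_\alpha O_{\alpha\beta}O_{\alpha\gamma}\big)\phi^\beta\phi^\gamma = \sum_\beta (\phi^\beta)^2$ because the pairing is the complex-\emph{bilinear} one and $O$ is orthogonal. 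Thus $\psi$ is a globally well-defined object, unobstructed by the normal gauge freedom.

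Next I would equip $M^2$ with a local isothermal coordinate $z = x + iy$, so that the induced conformal structure turns $M$ into a Riemann surface, and interpret $\psi\,(dz)^4$ as a section of the line bundle $K^{\otimes 4}$, where $K$ is the canonical bundle. The key claim is that this section is \emph{holomorphic}, i.e. $\partial_{\bar z}\psi = 0$. To see this I would express the $(2,0)$-part of the second fundamental form as $h_{zz}^\alpha$ (proportional to $\phi^\alpha$) and invoke the Codazzi equation $h_{ijk}^\alpha = h_{ikj}^\alpha$ together with minimality: minimality makes the mixed component $h_{z\bar z}^\alpha$ proportional to the mean curvature, hence $0$, and Codazzi then gives $\nabla^N_{\partial_{\bar z}} h^{(2,0)} = 0$. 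Concretely this reads $\partial_{\bar z}\phi^\gamma = \sum_\alpha \omega_{\gamma\alpha}(\partial_{\bar z})\,\phi^\alpha$, involving only the normal connection form. Differentiating, $\partial_{\bar z}\psi = 2\sum_\gamma \phi^\gamma\,\partial_{\bar z}\phi^\gamma = 2\sum_{\gamma,\alpha}\omega_{\gamma\alpha}(\partial_{\bar z})\,\phi^\gamma\phi^\alpha$, which vanishes because $\omega_{\gamma\alpha} = -\omega_{\alpha\gamma}$ is antisymmetric while $\phi^\gamma\phi^\alpha$ is symmetric in $\gamma,\alpha$. This step, the holomorphicity coming from Codazzi plus the cancellation against the antisymmetric normal connection, is where the hypotheses (minimal, space-form ambient) are genuinely used and is the main technical point of the argument.

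Finally I would invoke the topology of the $2$-sphere. Since $M$ is a $2$-sphere, its conformal structure makes it biholomorphic to $\mathbb{CP}^1$, on which $\deg K = 2g - 2 = -2$, so $\deg K^{\otimes 4} = -8 < 0$; a holomorphic section of a holomorphic line bundle of negative degree over a compact Riemann surface must be identically zero. Equivalently, $\mathbb{CP}^1$ carries no nonzero holomorphic $k$-differential for $k \ge 1$. Therefore $\psi \equiv 0$, which by the reduction above delivers $|a|^2 = |b|^2 = \frac{S}{4}$, $\langle a,b\rangle = 0$, proving \eqref{abortho}, and then \eqref{eigennormal} follows from \Cref{lem eigen of A}. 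The only real obstacle is the holomorphicity verification; the remaining degree/vanishing argument is standard, and the reduction to $\psi\equiv 0$ is purely algebraic.
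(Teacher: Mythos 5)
Your proof is correct and follows essentially the same route as the paper: the quantity $\psi=\sum_\alpha(\phi^\alpha)^2=|a|^2-|b|^2-2i\langle a,b\rangle$ is exactly the coefficient of the quartic differential $\phi=(|a|^2-|b|^2-2i\langle a,b\rangle)\,dz^4$ that the paper shows is globally defined and holomorphic (via the Cauchy--Riemann equations, using Codazzi and minimality) and hence vanishes on the $2$-sphere. Your gauge-invariance check and the antisymmetry-of-$\omega_{\gamma\alpha}$ cancellation are just a cleaner packaging of the same computation, and part \eqref{eigennormal} follows from \Cref{lem eigen of A} in both arguments.
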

\begin{proof}
	Define $$\phi:=(|a|^2-|b|^2-2i\l a,b\r)dz^4,$$ where $z$ is the complex coordinate on the sphere. Then, $\phi$ is a differential $4$-form. It can be verified that $\phi$ is independent of the choice of the local frame, thus $\phi$ is defined globally on $M$. We now show that $\phi$ is holomorphic by verifying the Cauchy-Riemann equations. First we have
	\begin{align*}
		&\hspace{1.3em}e_1(|a|^2-|b|^2)+e_2(2\l a,b\r)\\
		&=2\l a,a_{1}\r-2\l b,b_{1}\r+2\l a,b_2\r+2\l b,a_{2}\r\\
		&=2\l a,a_{1}\r-2\l b,a_2\r-2\l a,a_1\r+2\l b,a_{2}\r\\
		&=0.
	\end{align*}
	A similar argument shows that $e_2(|a|^2-|b|^2)-e_1(2\l a,b\r)=0$. Therefore, $\phi$ is holomorphic. But any holomorphic differential form on the $2$-sphere must be zero. The result \eqref{abortho} follows. Combining \Cref{lem eigen of A}, we obtain $\lambda_1=\lambda_2=\frac{S}{2}$ and $\lambda_3=\cdots=\lambda_q=0$.
\end{proof}
\begin{rem}
	Using the same process, we can also obtain the relationship between $a_1$ and $a_2$ for minimal $2$-spheres \emph{(}see \cite{DGL}\emph{).}
\end{rem}

Furthermore, the following theorems are required.

\begin{thm}[Calabi \cite{Calabi}]\label{thm Calabi}
	Let $M^2$ be a $2$-sphere with a Riemannian metric of constant curvature $K$, and let $X:M\to r\mathbb{S}^{N}\subset\mathbb{R}^{N+1}~(N\ge2)$ be an isometric, minimal immersion of $M$ into the sphere with radius $r$, such that the image is not contained in any hyperplane of $\mathbb{R}^{N+1}$. Then
	\begin{enumerate}
 		\item The dimension $N$ is even, i.e., $N=2s$;
 		\item The value of $K$ is uniquely determined as $$K=\frac{2}{s(s+1)r^2}\eqqcolon K(s,r);$$
 		\item The immersion $X$ is uniquely determined up to a rigid rotation of $r\mathbb{S}^{N}$, and the $N$ components of the vector $X$ are a suitably normalized basis for the spherical harmonics of order $s$ on $M$.
	\end{enumerate}
\end{thm}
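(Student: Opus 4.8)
The plan is to combine the Takahashi eigenvalue equation with the representation theory of $SO(3)$ on spherical harmonics, reducing the whole statement to a single rigidity (injectivity) fact special to surfaces. First I would use that, for a minimal isometric immersion $X\colon M^2\to r\mathbb{S}^N\subset\mathbb{R}^{N+1}$, the position vector obeys the Takahashi equation $\Delta X=-\tfrac{2}{r^2}X$, so every component $X^A$ ($0\le A\le N$) is a Laplace eigenfunction on $(M,g)$ with eigenvalue $\tfrac{2}{r^2}$. Since $M$ is a round $2$-sphere of curvature $K$, its spectrum is $\{\,k(k+1)K:k\ge 0\,\}$, with eigenspace $V_k$ the degree-$k$ spherical harmonics and $\dim V_k=2k+1$. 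The $X^A$ are non-constant, so matching eigenvalues gives $\tfrac{2}{r^2}=s(s+1)K$ for a unique integer $s\ge 1$; this is exactly statement (2). Moreover each $X^A\in V_s$, and since $V_0\perp V_s$ the image lies in a hyperplane if and only if the $X^A$ are linearly dependent, so fullness yields the bound $N+1\le\dim V_s=2s+1$.

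Next I would compare $X$ with the standard immersion. Fix an $L^2$-orthonormal basis $\{\phi_1,\dots,\phi_{2s+1}\}$ of $V_s$, rescaled by one constant so that $\psi_s:=(\phi_1,\dots,\phi_{2s+1})\colon M\to\mathbb{R}^{2s+1}$ is isometric for $g$; such a normalization exists because $\sum_i d\phi_i\otimes d\phi_i$ is an $SO(3)$-invariant symmetric $2$-tensor, hence a constant multiple of $g$. Writing each component of $X$ as a combination of the $\phi_i$ yields a matrix $C$ of size $(N+1)\times(2s+1)$ with $X=C\psi_s$. Introducing the linear map $\Phi(P):=\sum_{i,j}P_{ij}\,d\phi_i\otimes d\phi_j$ on symmetric matrices $P$, one has $g_X=\Phi(C^{T}C)$ and $g=g_{\psi_s}=\Phi(I)$; as $X$ and $\psi_s$ induce the same metric $g$, this collapses to the single gauge identity $\Phi(C^{T}C-I)=0$.

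The hard part, and the heart of Calabi's theorem, is the rigidity: that $\Phi$ is injective, so that $C^{T}C=I$. Granting this, $C^{T}C=I$ forces $N+1\ge 2s+1$, which together with the bound above gives $N=2s$ (statement (1)) and makes $C$ a genuine orthogonal matrix --- a rigid rotation carrying $\psi_s$ to $X$, whose components are therefore a normalized basis of $V_s$ (statement (3)). To prove injectivity I would exploit that $\Phi$ is $SO(3)$-equivariant and that $\mathrm{Sym}^2(V_s)=\bigoplus_{t=0}^{s}V_{2t}$ with multiplicity one, so by Schur's lemma $\Phi$ either annihilates or embeds each $V_{2t}$; it then suffices to show that no component is annihilated.

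The mechanism reducing this to an arithmetic check is the auxiliary function $F_P:=\sum_{ij}P_{ij}\phi_i\phi_j$: a Hessian computation gives $\nabla^2F_P=2\,\Phi(P)+2\sum_{ij}P_{ij}\phi_i\nabla^2\phi_j$, so whenever $\Phi(P)=0$, tracing yields $\Delta F_P=-2s(s+1)K\,F_P$, forcing the even-degree function $F_P$ into the eigenspace of eigenvalue $2s(s+1)K$; since $2s(s+1)=t(t+1)$ has no admissible even solution $t$ for most $s$, one gets $F_P\equiv 0$ and hence $P=0$ via the Clebsch--Gordan isomorphism. The main obstacle is precisely the residual degrees $s$ for which that eigenspace is nonempty: there the trace alone does not suffice, and one must use the full second-order structure of spherical harmonics (equivalently Calabi's holomorphic-directrix argument, or the do Carmo--Wallach isotypic analysis) to rule out a nontrivial kernel. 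Once $\ker\Phi=0$ is secured for all $s$, the three assertions follow exactly as above, and this surface-specific rigidity is what fails for $\mathbb{S}^m$ with $m\ge 3$.
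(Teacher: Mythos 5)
The paper offers no proof of this statement: it is Calabi's classical theorem, imported verbatim with a citation to \cite{Calabi}, so the only comparison available is with the literature (Calabi's holomorphic-directrix argument, or the do Carmo--Wallach eigenfunction method that you follow). The formal part of your reduction is correct and standard: Takahashi's equation $\Delta X=-\tfrac{2}{r^2}X$ together with the spectrum $\{k(k+1)K\}_{k\ge0}$ of the round metric yields statement (2) and places every component of $X$ in $V_s$; fullness gives $N+1\le\dim V_s=2s+1$; and the gauge identity $\Phi(C^{T}C-I)=0$ correctly reduces statements (1) and (3) to the injectivity of $\Phi$ on $\operatorname{Sym}^2(V_s)$.

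That injectivity, however, is the entire content of the theorem, and your argument for it has a genuine gap which you yourself flag. The trace mechanism only shows that a kernel element $P$ produces an eigenfunction $F_P\in\bigoplus_{t=0}^{s}V_{2t}$ with eigenvalue $2s(s+1)K$, which forces $F_P=0$ only when $2s(s+1)\ne 2t(2t+1)$ for all $0\le t\le s$. This Diophantine condition fails for an infinite Pell-type family of degrees (solutions of $2(2s+1)^2-(4t+1)^2=1$ with $4t+1\equiv1\pmod 4$), the first genuine instance being $s=14$, where $2\cdot14\cdot15=420=20\cdot21$ with $t=10\le s$. For all such $s$ your argument leaves open a possible nonzero kernel component in $V_{2t}$, and you dispose of it only by appealing to ``Calabi's holomorphic-directrix argument, or the do Carmo--Wallach isotypic analysis'' --- that is, to the very rigidity being proved. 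Since this residual step is exactly what distinguishes $S^2$ from $S^m$ with $m\ge3$ (where the analogous kernel is genuinely nonzero and the moduli space of such immersions is positive-dimensional), the proposal is a correct and informative reduction but not a proof: the exceptional degrees must be eliminated by an actual second-order identity for spherical harmonics on $S^2$, or by Calabi's original complex-analytic argument. (A smaller point in the same spirit: the implication $F_P=0\Rightarrow P=0$ needs the nonvanishing of the relevant Gaunt/Clebsch--Gordan coefficients, which you assert but do not verify.)
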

The immersion denoted by $\Psi_{2,s}:S^2(K(s))\to\mathbb{S}^{2s}$ is called Calabi's $2$-sphere, where $K(s)=K(s,1)$, $S^2(K(s))$ is the sphere with curvature $K(s)$, and $s=1,2,\cdots$. As we mentioned before, the Simon conjecture has only been solved in the cases $s=1$ and $s=2$. To continue, we first state the second gap theorem (cf. \cite{DGL,Simon}) of the Simon conjecture:
 	
\begin{thm}[\cite{DGL,Simon}]\label{simongap2}
	The Simon conjecture is true for $s=2$, i.e., if $\frac{4}{3}\le S\le\frac{5}{3}$, then either $S=\frac{4}{3}$ or $S=\frac{5}{3}$.
\end{thm}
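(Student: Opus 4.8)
The plan is to use the pinching to force $M$ to be a topological sphere, and then to collapse the second-order integral formula \eqref{2ndgap} using the rigidity of \Cref{ab}. First I would rewrite the hypothesis in terms of the Gauss curvature: since $2K=2-S$, the bound $\frac{4}{3}\le S\le\frac{5}{3}$ is equivalent to $\frac{1}{6}\le K\le\frac{1}{3}$, so $K>0$ everywhere. By Gauss--Bonnet, $\int_M K=2\pi\chi(M)>0$, hence $\chi(M)>0$ and $M$ is diffeomorphic to the $2$-sphere or to $\mathbb{RP}^2$; in the latter case I pass to the orientable double cover, which is a $2$-sphere and on which minimality and the pinching are preserved. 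Thus I may assume $M$ is a minimal $2$-sphere.

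On a minimal $2$-sphere, \Cref{ab} gives $\lambda_1=\lambda_2=\frac{S}{2}$, so $\rho_0^\perp=S^2$ (equality in \eqref{rhorho0}) and, crucially, $(S-2\lambda_2)^2\equiv 0$. Substituting this into the second-order integral formula \eqref{2ndgap} of \Cref{main3} kills the term $\frac{1}{2}(16-9S)(S-2\lambda_2)^2$ and leaves
\[
0\le\int_M S(3S-4)(3S-5).
\]
This is the decisive step: for a general surface the surviving $(S-2\lambda_2)^2$-term only produces the non-sharp threshold $\frac{20}{9}$ (the flat-normal-bundle value), whereas on the sphere it vanishes identically, allowing the sharp endpoints to appear.

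Finally I would exploit the pinching pointwise. For $\frac{4}{3}\le S\le\frac{5}{3}$ we have $S>0$, $3S-4\ge 0$ and $3S-5\le 0$, so the continuous integrand $S(3S-4)(3S-5)$ is $\le 0$ everywhere; combined with the inequality $0\le\int_M S(3S-4)(3S-5)$ it must vanish identically. Hence at every point $S=\frac{4}{3}$ or $S=\frac{5}{3}$, and since $S$ is continuous on the connected $M$ it is constant, equal to one of these two values. \Cref{thm Calabi} then identifies $M$ with $\Psi_{2,2}$ (the Veronese surface, $K=\frac{1}{3}$) or $\Psi_{2,3}$ ($K=\frac{1}{6}$). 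I expect the genuine difficulty to lie not in this pinching endgame but in the inputs it consumes: establishing the non-negativity in \eqref{2ndgap}, which requires iterating the Simons identity \eqref{simonidentity} and estimating $|\nabla^2 h|^2$, together with the holomorphic-differential argument underlying \Cref{ab}. Once these are granted, the reduction to the sphere makes the gap collapse immediately.
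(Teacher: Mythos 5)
The paper does not actually prove \Cref{simongap2}; it is quoted from \cite{DGL,Simon}, so there is no internal proof to compare your attempt against. Your argument is nonetheless correct and, importantly, non-circular: \Cref{ab} and \Cref{main3} are established in Section 4 without any appeal to \Cref{simongap2}, so you may legitimately combine them. The reduction to a minimal $2$-sphere via $2K=2-S$ and Gauss--Bonnet (with the orientable double cover handling the $\mathbb{RP}^2$ case) is sound; on a minimal $2$-sphere \Cref{ab} gives $\lambda_2=\frac{S}{2}$, hence $(S-2\lambda_2)^2\equiv0$, and \eqref{2ndgap} collapses to $0\le\int_M S(3S-4)(3S-5)$, whose integrand is continuous and nonpositive under the pinching $\frac{4}{3}\le S\le\frac{5}{3}$; it therefore vanishes identically, and since $S>0$ the dichotomy $S\in\{\frac{4}{3},\frac{5}{3}\}$ holds pointwise, whence connectedness forces $S$ to be constant and \Cref{thm Calabi} identifies the immersion. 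This is in substance the route of the cited source \cite{DGL} (whose formula $(2.17)$ reappears here as \Cref{lem4.4} and whose integral identities \Cref{main3} generalizes), so you have reconstructed the external proof from the tools of this paper rather than found a genuinely new one. The only caveat is organizational: as written, \Cref{simongap2} is invoked in Section 3 to prove \Cref{main1}, while \Cref{main3} is only proved in Section 4, so if your proof were to be made internal to the paper the order of presentation (or at least the absence of circularity) should be made explicit.
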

Now we can give the proof of \Cref{main1}.
\begin{proof}[\textbf{Proof of \Cref{main1}}]
	Since $M$ is a closed minimal $2$-sphere, using \Cref{ab} we obtain that $\lambda_2=\frac{S}{2}$. Since $S+\lambda_2$ is constant, it follows that $S$ is constant, which implies that $K$ is constant. Therefore, \Cref{main1} \eqref{main1.1} follows from \Cref{thm Calabi}.\par
	For Theorem \ref{main1} \eqref{main1.2}, namely, in the case of $S+\lambda_2>2$, we prove as follows. By \Cref{ab}, $\lambda_2=\frac{1}{2}S$ and thus $S+\lambda_2=\frac{3}{2}S$. By setting
	\begin{equation*}
		S(s)\coloneqq 2-2K(s)=\frac{2(s-1)(s+2)}{s(s+1)},
	\end{equation*}
	if \Cref{Udo Simon} is true and $\frac{3}{2}S(s) \leq S+\lambda_2=\frac{3}{2}S\leq  \frac{3}{2}S(s+1)$ for an $s\in\mathbb{N}$, then either $S=S(s)$ or $S=S(s+1)$. In particular, we argue by contradiction and assume that $\max_{p\in M}(S+\lambda_2)<\frac{5}{2}$. Then we have $S+\lambda_2<\frac{5}{2}$, which implies that $$2<S+\lambda_2=\frac{3}{2}S<\frac{5}{2}.$$ It follows that $\frac{4}{3}<S<\frac{5}{3}$. However, such a surface does not exist by \Cref{simongap2}. Hence we obtain $\max_{p\in M} \left( S+\lambda_2\right)(p)\ge\frac{5}{2}$, and the equality holds if and only if $q=4$ and the submanifold is Calabi's $2$-sphere with curvature $K=\frac{1}{6}$ by \Cref{thm Calabi}.
\end{proof}

\section{Lu's conjecture for minimal surfaces}
We next consider the more general case. For convenience, we define
\begin{equation}\label{udef}
	\u\coloneqq S+\lambda_2=\frac{3}{2}S-\sqrt{\left(|a|^2-|b|^2\right)^2+4\l a,b\r^2}=\frac{3}{2}S-\frac{1}{2}\sqrt{S^2-\rho_0^\perp}\le\frac{3}{2}S.
\end{equation}
Since $0\le\rho_0^\perp\le S^2$ shown in \eqref{rhorho0} by the DDVV inequality, we define the function $t:M\to[0,1]$ such that
\begin{equation}\label{tdef}
	\sqrt{S^2-\rho_0^\perp}=tS.
\end{equation}
Then by \eqref{udef} we have
\begin{equation*}
	\mathfrak{u}=S+\lambda_2=\frac{1}{2}\left(3S-\sqrt{S^2-\rho_0^\perp}\right)=\frac{3-t}{2}S\le\frac{3}{2}S.
\end{equation*}
Thus
\begin{equation}\label{??}
	3S-2\mathfrak{u}=\sqrt{S^2-\rho_0^\perp}.
\end{equation}
It follows from \eqref{??} that
\begin{equation*}
	S=\frac{3}{4}\mathfrak{u}\pm\frac{\sqrt{\mathfrak{u}^2-2\rho_0^\perp}}{4}.
\end{equation*}

First we give the proofs of \Cref{main4} and \Cref{main4.5}.
\begin{proof}[\textbf{Proof of \Cref{main4}}]
	Since the universal cover of $M$ is not diffeomorphic to a $2$-sphere, it follows that $\max_{p\in M}S(p)\ge2$. If $S=\frac{3}{4}\mathfrak{u}-\frac{\sqrt{\mathfrak{u}^2-2\rho_0^\perp}}{4}$, then we obtain that $\max_{p\in M}(S+\lambda_2)(p)=\max_{p\in M}\mathfrak{u}(p)\ge\frac{8}{3}$. On the other hand, if $S=\frac{3}{4}\mathfrak{u}+\frac{\sqrt{\mathfrak{u}^2-2\rho_0^\perp}}{4}$ and there exists $p\in M$ such that $S(p)\ge 2$, then we obtain that
	\begin{equation*}
		(\mathfrak{u}(p)-3)^2+\frac{\rho_0^\perp(p)}{4}-1\le0.
	\end{equation*}
	Hence we obtain that $\rho^\perp(p)\le1$ and
	\begin{equation*}
		3-\sqrt{1-\left(\rho^\perp\right)^2(p)}\le(S+\lambda_2)(p)\le3+\sqrt{1-\left(\rho^\perp\right)^2(p)},
	\end{equation*}
	i.e.,
	\begin{equation*}
		\max_{p\in M}(S+\lambda_2)(p)\ge3-\sqrt{1-\min_{p\in M}\left(\rho^\perp\right)^2(p)},
	\end{equation*}
	which completes the proof.
\end{proof}
\begin{proof}[\textbf{Proof of \Cref{main4.5}}]
	By \Cref{lem eigen of A} we obtain
	\begin{equation*}
		\begin{aligned}
			S+\lambda_2&=S+\frac{S}{2}-\frac{1}{2}\sqrt{S^2-\rho_0^\perp}\\
			&=S+\frac{\rho_0^\perp}{2\left(S+\sqrt{S^2-\rho_0^\perp}\right)}\\
			&\ge S+\frac{\rho_0^\perp}{4S}.
		\end{aligned}
	\end{equation*}
	First, we consider the case when $M$ is orientable. By the Gauss-Bonnet formula and the Gauss equation $2K=2-S$, we obtain
	\begin{equation*}
		\begin{aligned}
			\int_M(S+\lambda_2)&\ge\int_M S+\int_M\frac{\rho_0^\perp}{4S}\\
			&=2\operatorname{Area}(M)+8\pi(g-1)+\int_M\frac{\rho_0^\perp}{4S}\\
			&\ge 2\operatorname{Area}(M)+8\pi(g-1)+\frac{1}{4\max_{p\in M}S(p)}\int_M\rho_0^\perp,
		\end{aligned}
	\end{equation*}
	where $g$ denotes the genus of $M$. Since $M$ is not diffeomorphic to a $2$-sphere, we obtain that $g\ge 1$. Thus
	\begin{equation*}
		\begin{aligned}
			\max_{p\in M}(S+\lambda_2)(p)&\ge2+\frac{1}{4\max_{p\in M}S(p)\operatorname{Area}(M)}\int_M\rho_0^\perp+\frac{8\pi(g-1)}{\operatorname{Area}(M)}\\
			&\ge2+\frac{1}{4\max_{p\in M}S(p)\operatorname{Area}(M)}\int_M\rho_0^\perp\\
			&\ge2+\frac{1}{4\max_{p\in M}(S+\lambda_2)(p)\operatorname{Area}(M)}\int_M\rho_0^\perp,
		\end{aligned}
	\end{equation*}
	which yields that
	\begin{equation*}
		\begin{aligned}
			\max_{p\in M}(S+\lambda_2)(p)&\ge1+\sqrt{1+\frac{1}{4\operatorname{Area}(M)}\int_M\rho_0^\perp}
			=1+\sqrt{1+\frac{1}{\operatorname{Area}(M)}\int_M\left(\rho^\perp\right)^2}.
		\end{aligned}
	\end{equation*}
	In general, any closed non-orientable surface is diffeomorphic to a connected sum of $\mathfrak{g}$ copies of the projective plane, where $\mathfrak{g}\ge1$ and the Euler characteristic $\chi(M)=2-\mathfrak{g}$. If $\mathfrak{g}=1$, then $M$ is diffeomorphic to $\mathbb{R}P^2$, in which case the universal cover of $M$ is diffeomorphic to a $2$-sphere. Thus it suffices to assume that $\mathfrak{g}\ge2$, which gives that $\chi(M)\le0$. Then we have
	\begin{equation*}
		\begin{aligned}
			\int_M(S+\lambda_2)&\ge\int_M\left(S+\frac{\rho_0^\perp}{4S}\right)\\
			&\ge2\operatorname{Area}(M)+\frac{1}{4\max_{p\in M}S(p)}\int_M\rho_0^\perp-4\pi\chi(M)\\
			&\ge2\operatorname{Area}(M)+\frac{1}{4\max_{p\in M}S(p)}\int_M\rho_0^\perp.
		\end{aligned}
	\end{equation*}
	Hence we obtain that
	\begin{equation*}
		\max_{p\in M}(S+\lambda_2)(p)\ge2+\frac{1}{4\max_{p\in M}(S+\lambda_2)(p)\operatorname{Area}(M)}\int_M\rho_0^\perp,
	\end{equation*}
	which implies the same lower bound as the orientable case.
\end{proof}
To prove \Cref{main3}, we need the following lemmas.
\begin{lem}[\cite{DGL}]\label{lem4.4}
	Let $M^2$ be a minimal surface immersed in $\mathbb{S}^{2+q}$. Then
	\begin{equation*}
		\begin{aligned}
			(h_{ijk}^\alpha\tri h_{ij}^\alpha)_k&=\frac{1}{2}(4-3S)\mathcal{B}_1+(2-S)^2S+\frac{1}{2}(5S-8)(-S^2+|A|^2+\rho_0^\bot)\\
			&\hspace{1.3em}-\frac{1}{4}|\nabla S|^2+32\l a,a_2\r\l b,a_1\r-32\l a,a_1\r\l b,a_2\r\\
			&=\frac{1}{2}(2-S)S^2+(2-S)(S^2-|A|^2-\rho_0^{\bot})+\tri S-\frac{3}{8}\tri S^2\\
			&\hspace{1.3em}+8(\l a,a_2\r+\l b,a_1\r)^2+8(\l a,a_1\r-\l b,a_2\r)^2.
		\end{aligned}
	\end{equation*}
\end{lem}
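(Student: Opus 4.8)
The plan is to establish the two displayed equalities in turn: the first is the substantive differential-geometric computation, while the second is an algebraic rewriting via the Simons identity. For the first equality I would begin by noting that $(h_{ijk}^\alpha\tri h_{ij}^\alpha)_k$ is a divergence, so the product rule together with $\sum_k\nabla_k h_{ijk}^\alpha=\sum_k h_{ijkk}^\alpha=\tri h_{ij}^\alpha$ gives
\begin{equation*}
(h_{ijk}^\alpha\tri h_{ij}^\alpha)_k=|\tri h|^2+\sum_{i,j,k,\alpha}h_{ijk}^\alpha\nabla_k(\tri h_{ij}^\alpha),
\end{equation*}
reducing the problem to computing $|\tri h|^2$ and $h_{ijk}^\alpha\nabla_k(\tri h_{ij}^\alpha)$ explicitly. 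The basic ingredient is a closed form for $\tri h_{ij}^\alpha$: substituting minimality ($h_{mm}^\alpha=0$, hence $h_{mmij}^\alpha=0$) and the curvature formulas \eqref{Curv1}, \eqref{Curv2} into \eqref{Lap1} and using $\dim M=2$, the intrinsic terms collapse to $(2-S)h_{ij}^\alpha$ and the normal-curvature term $h_{mi}^\delta R^\perp_{\delta\alpha jm}$ becomes quadratic in $a,b$; explicitly $\tri h_{11}^\alpha=(2-S)a^\alpha+2\langle a,b\rangle b^\alpha-2|b|^2a^\alpha$, $\tri h_{12}^\alpha=(2-S)b^\alpha+2\langle a,b\rangle a^\alpha-2|a|^2b^\alpha$, and $\tri h_{22}^\alpha=-\tri h_{11}^\alpha$. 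Writing $\tri h_{ij}^\alpha=(2-S)h_{ij}^\alpha+N_{ij}^\alpha$ and expanding, \Cref{lem4.1} converts $|\tri h|^2$ into $(2-S)^2S+(\tfrac{5}{4}S-2)\rho_0^\perp$, the $\rho_0^\perp$ arising through $\langle a,N_{11}\rangle+\langle b,N_{12}\rangle=-\tfrac14\rho_0^\perp$ and $\sum(N_{ij}^\alpha)^2=\tfrac14S\rho_0^\perp$.

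The heart of the matter is $\sum h_{ijk}^\alpha\nabla_k(\tri h_{ij}^\alpha)$. Splitting off the $(2-S)h_{ij}^\alpha$ part gives $(2-S)\mathcal{B}_1-\tfrac12|\nabla S|^2$ at once, via $h_{ijk}^\alpha h_{ij}^\alpha=\tfrac12 S_k$; all the delicacy sits in $h_{ijk}^\alpha\nabla_k N_{ij}^\alpha$. To compute this I would reduce to the data $a,b,a_1,a_2$ using the Codazzi symmetry and minimality, which force $h_{11k}^\alpha=a_k^\alpha$, $h_{22k}^\alpha=-a_k^\alpha$, and $h_{12k}^\alpha=b_k^\alpha$ with $b_1=a_2$, $b_2=-a_1$ (the relations already implicit in \Cref{lem4.1}). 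Differentiating the quadratic $N_{ij}^\alpha$, contracting against $h_{ijk}^\alpha$, and summing over $i,j,k$ (where the $(1,1)$ and $(2,2)$ slots contribute equally, as do $(1,2)$ and $(2,1)$), the terms proportional to $\langle a,b\rangle\langle a_1,a_2\rangle$ cancel, and using $|a_1|^2+|a_2|^2=\tfrac14\mathcal{B}_1$ together with \Cref{lem4.1} the remainder organizes into $-\tfrac12 S\mathcal{B}_1+\tfrac14|\nabla S|^2+32\langle a,a_2\rangle\langle b,a_1\rangle-32\langle a,a_1\rangle\langle b,a_2\rangle$. Summing the three contributions and rewriting $(\tfrac{5}{4}S-2)\rho_0^\perp=\tfrac12(5S-8)(-S^2+|A|^2+\rho_0^\perp)$ by \Cref{lem4.1} yields the first displayed identity.

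For the second equality I would proceed purely algebraically. Using the Simons identity \eqref{simonidentity} in the form $\mathcal{B}_1=\tfrac12\tri S-2S+|A|^2+\rho_0^\perp$ to eliminate $\mathcal{B}_1$, the product rule $\tri S^2=2S\tri S+2|\nabla S|^2$ to introduce the $\tri S^2$ term, and \Cref{lem4.1} to pass between $-S^2+|A|^2+\rho_0^\perp$ and $S^2-|A|^2-\rho_0^\perp$, I would check that the coefficients of $\tri S$ agree (both equal to $(1-\tfrac34 S)\tri S$) and that $-\tfrac14|\nabla S|^2$ together with the antisymmetric cross terms reassemble, through \Cref{lem4.1}, into the two perfect squares $8(\langle a,a_2\rangle+\langle b,a_1\rangle)^2+8(\langle a,a_1\rangle-\langle b,a_2\rangle)^2$.

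The main obstacle is plainly the normal-curvature term $h_{ijk}^\alpha\nabla_k N_{ij}^\alpha$: it is the unique place where $\rho_0^\perp$, $|\nabla S|^2$, and the antisymmetric pairings $\langle a,a_2\rangle\langle b,a_1\rangle$ occur at once, and the final coefficients are extremely sensitive to bookkeeping—specifically, to the factor-of-two doubling of the off-diagonal $(1,2)$ and $(2,1)$ slots and to a consistent use of $b_1=a_2$, $b_2=-a_1$. A single dropped factor of two in $\sum_{i,j,\alpha}h_{ij}^\alpha N_{ij}^\alpha$ or in $\sum(N_{ij}^\alpha)^2$ shifts the $\rho_0^\perp$-coefficient and destroys the match, so the entire computation hinges on tracking these multiplicities exactly.
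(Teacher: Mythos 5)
Your proposal is correct, and I could verify every intermediate identity you state. Be aware, though, that the paper itself contains no proof of this lemma: its ``proof'' is the single line ``See $(2.17)$ in the proof of Theorem 1 in \cite{DGL}'', so your computation supplies an argument that the paper outsources rather than paralleling one it gives. Concretely: the Leibniz split $(h_{ijk}^\alpha\triangle h_{ij}^\alpha)_k=|\triangle h|^2+h_{ijk}^\alpha(\triangle h_{ij}^\alpha)_k$ is valid since $\sum_k h_{ijkk}^\alpha=\triangle h_{ij}^\alpha$; your closed forms for $\triangle h_{11}^\alpha$ and $\triangle h_{12}^\alpha$ agree with the expressions for $\triangle a$ and $\triangle b$ used in \Cref{lem4.3}; your values $\sum_{i,j,\alpha}h_{ij}^\alpha N_{ij}^\alpha=-\tfrac12\rho_0^\perp$ and $\sum(N_{ij}^\alpha)^2=\tfrac14 S\rho_0^\perp$ give $|\triangle h|^2=(2-S)^2S+\bigl(\tfrac54S-2\bigr)\rho_0^\perp$, which matches the term $\tfrac12(5S-8)(-S^2+|A|^2+\rho_0^\perp)$ of the statement via $|A|^2=S^2-\tfrac12\rho_0^\perp$; and with $b_1=a_2$, $b_2=-a_1$ the contraction $h_{ijk}^\alpha\nabla_kN_{ij}^\alpha$ does come out to $-\tfrac12S\mathcal{B}_1+\tfrac14|\nabla S|^2+32\langle a,a_2\rangle\langle b,a_1\rangle-32\langle a,a_1\rangle\langle b,a_2\rangle$ once $4\bigl(\langle a,a_1\rangle^2+\langle a,a_2\rangle^2+\langle b,a_1\rangle^2+\langle b,a_2\rangle^2\bigr)$ is rewritten using \Cref{lem4.1}; the $\langle a,b\rangle\langle a_1,a_2\rangle$ terms indeed cancel between $k=1$ and $k=2$. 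The passage to the second displayed line via $\mathcal{B}_1=\tfrac12\triangle S-2S+|A|^2+\rho_0^\perp$ and $\triangle S^2=2S\triangle S+2|\nabla S|^2$ also balances exactly: the coefficient of $\mathcal{B}_1$ becomes $\tfrac12(4-3S)$, the $\rho_0^\perp$ terms recombine to $\bigl(\tfrac54S-2\bigr)\rho_0^\perp$, and the discrepancy $-\tfrac12|\nabla S|^2$ is absorbed by converting the two perfect squares into the cross terms. The only point worth making explicit in a full write-up is that differentiating the componentwise formula for $\triangle h_{ij}^\alpha$ term by term is legitimate because it is a tensorial identity, so $\nabla_k$ may be computed in a frame parallel at the chosen point with $\nabla_k h_{ij}^\alpha=h_{ijk}^\alpha$.
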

\begin{proof}
	See $(2.17)$ in the proof of Theorem 1 in \cite{DGL}.
\end{proof}
\begin{lem}\label{lem4.2}
	Let $M^2$ be a minimal surface immersed in $\mathbb{S}^{2+q}$. Then
	\begin{equation}
		\begin{aligned}
			\frac{1}{2}\tri\mathcal{B}_1&=h_{ijk}^\alpha\tri h_{ijk}^\alpha+\mathcal{B}_2\\
			&=\frac{1}{2}(14-9S)\mathcal{B}_1-\frac{3}{4}|\nn S|^2+72\l a,a_2\r\l b,a_1\r-72\l a,a_1\r\l b,a_2\r\\
			&\hspace{1.3em}+4\left(\l a,a_1\r^2+\l a,a_2\r^2+\l b,a_1\r^2+\l b,a_2\r^2\right)+\mathcal{B}_2\\
			&=\frac{1}{2}(14-9S)\B_1-\frac{7}{4}|\nn S|^2+\B_2+20\ll(\l a,a_1\r-\l b,a_2\r\rr)^2+20\ll(\l a,a_2\r+\l b,a_1\r\rr)^2.
		\end{aligned}
	\end{equation}
\end{lem}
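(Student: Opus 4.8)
The plan is to follow the Bochner--Simons route, now applied one derivative higher than in the Simons identity \eqref{simonidentity}. The first equality is immediate: writing $\mathcal{B}_1=|\nabla h|^2=h_{ijk}^\alpha h_{ijk}^\alpha$ and applying the Bochner formula $\frac12\triangle|T|^2=\langle T,\triangle T\rangle+|\nabla T|^2$ to $T=\nabla h$ gives $\frac12\triangle\mathcal{B}_1=h_{ijk}^\alpha\triangle h_{ijk}^\alpha+|\nabla^2 h|^2=h_{ijk}^\alpha\triangle h_{ijk}^\alpha+\mathcal{B}_2$. Everything then reduces to evaluating the single contraction $h_{ijk}^\alpha\triangle h_{ijk}^\alpha$ pointwise.

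For this I would substitute the commutation formula \eqref{Lap2} and contract term by term. The leading piece $h_{ijk}^\alpha(\triangle h_{ij}^\alpha)_k$ is handled by first recording that, for a minimal surface, \eqref{Lap1} together with the intrinsic curvature \eqref{Curv1} collapses to $\triangle h_{ij}^\alpha=(2-S)h_{ij}^\alpha+h_{mi}^\delta R^\perp_{\delta\alpha jm}$ (here $h_{mmij}^\alpha=0$ by minimality and the two Ricci contractions each give $\frac12(2-S)h_{ij}^\alpha$); differentiating in $k$, contracting with $h_{ijk}^\alpha$, and using $h_{ij}^\alpha h_{ijk}^\alpha=\frac12 S_k$ together with the trace-free relation $h_{iij}^\alpha=0$ produces a clean $-\frac12|\nabla S|^2+(2-S)\mathcal{B}_1$ plus a derivative-of-normal-curvature remainder. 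The purely intrinsic curvature terms of \eqref{Lap2} are evaluated using \eqref{Curv1}: via $R_{pikm}=\frac12(2-S)(\delta_{pk}\delta_{im}-\delta_{pm}\delta_{ik})$ and the full symmetry of $h_{ijk}^\alpha$, the three Riemann contractions collapse to $\frac52(2-S)\mathcal{B}_1$, while the two second-covariant-derivative Riemann terms reduce, after the contracted-Bianchi computation $R_{pikmm}=-\frac12(S_i\delta_{pk}-S_p\delta_{ik})$, to $-\frac12|\nabla S|^2$.

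The heart of the argument, and the place I expect the real work, is the normal-curvature block $2h_{ijk}^\alpha h_{ijm}^\delta R^\perp_{\delta\alpha km}+h_{ijk}^\alpha h_{ij}^\delta R^\perp_{\delta\alpha kmm}$, together with the leftover normal term from $(\triangle h_{ij}^\alpha)_k$. Here I would abandon abstract index juggling and pass to the two-dimensional parametrization, expressing every tensor through the vectors $a,b\in\mathbb{R}^q$ and their covariant derivatives $a_1,a_2$, and substituting the explicit normal curvature \eqref{Curv2} and its covariant derivative \eqref{Curv3}; in $2$ dimensions the only independent component is $R^\perp_{\delta\alpha 12}=2(a^\delta b^\alpha-a^\alpha b^\delta)$, so the contractions reduce to inner products of $a,b,a_1,a_2$. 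This is precisely what manufactures the genuinely non-intrinsic cross terms $\langle a,a_2\rangle\langle b,a_1\rangle$ and $\langle a,a_1\rangle\langle b,a_2\rangle$ and the squares $\langle a,a_1\rangle^2,\dots,\langle b,a_2\rangle^2$, and it is also what supplies the extra $-S\mathcal{B}_1$ needed to upgrade the intrinsic coefficient $7-\frac72 S$ to the required $\frac12(14-9S)$, as well as the final adjustment of the $|\nabla S|^2$ coefficient. Collecting all contributions yields the second displayed expression.

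Finally, the third expression is pure algebra. Substituting Lemma~\ref{lem4.1}(3), namely $|\nabla S|^2=16(\langle a,a_1\rangle^2+\langle b,a_2\rangle^2+\langle a,a_2\rangle^2+\langle b,a_1\rangle^2+2\langle a,a_1\rangle\langle b,a_2\rangle-2\langle a,a_2\rangle\langle b,a_1\rangle)$, one checks that the difference between the second and third forms equals $-|\nabla S|^2+16(\langle a,a_1\rangle^2+\langle b,a_2\rangle^2+\langle a,a_2\rangle^2+\langle b,a_1\rangle^2+2\langle a,a_1\rangle\langle b,a_2\rangle-2\langle a,a_2\rangle\langle b,a_1\rangle)=0$, completing the identity. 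To reiterate, the main obstacle is the careful evaluation and bookkeeping of the normal-curvature contractions and their covariant derivatives after passing to the $(a,b,a_1,a_2)$ parametrization; the intrinsic terms and the closing algebra are routine.
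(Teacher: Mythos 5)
Your proposal is correct and follows essentially the same route as the paper: Bochner's formula for $\nabla h$, the commutation identity \eqref{Lap2}, evaluation of the intrinsic curvature contractions via \eqref{Curv1} (yielding $\frac{5}{2}(2-S)\mathcal{B}_1$ and $-\frac{1}{2}|\nabla S|^2$ exactly as the paper finds), evaluation of the normal-curvature blocks via \eqref{Curv2}--\eqref{Curv3} in the $(a,b,a_1,a_2)$ parametrization, and the final conversion using Lemma~\ref{lem4.1}(3). Your stated numerical targets (the upgrade of $7-\tfrac{7}{2}S$ to $7-\tfrac{9}{2}S$ by the extra $-S\mathcal{B}_1$ from the normal terms, and the vanishing of the difference between the second and third displayed forms) all match the paper's computation.
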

\begin{proof}
	By \eqref{Lap2}, we have
	\begin{equation*}\label{Sum1}
		\begin{aligned}
			h_{ijk}^\alpha\tri h_{ijk}^\alpha &=(h_{ijk}^\alpha\tri h_{ij}^\alpha)_k-\sum(\tri h_{ij}^\alpha)^2+2h_{ijk}^\alpha h_{pj}^\alpha R_{pikmm}+h_{ijk}^\alpha h_{ijp}^\alpha R_{pmkm}\\
			&\hspace{1.3em}+4h_{ijk}^\alpha h_{pjm}^\alpha R_{pikm}+2h_{ijk}^\alpha h_{ijm}^\beta R^{\perp}_{\beta\alpha km}+h_{ijk}^\alpha h_{ij}^\beta R^{\perp}_{\beta\alpha kmm}.
		\end{aligned}
	\end{equation*} 
	By \eqref{Curv1}, it follows that
	\begin{equation*}\label{sumpinch2term3}
		\begin{aligned}
			2h_{ijk}^\alpha h_{pj}^\alpha R_{pikmm}&=-h_{ijk}^\alpha h_{pj}^\alpha(\delta_{pk}\delta_{im}-\delta_{pm}\delta_{ik})S_m\\
			&=-h_{ijm}^\alpha h_{ij}^\alpha S_m\\
			&=-\frac{1}{2}|\nabla S|^2,
		\end{aligned}
	\end{equation*} 
	and
	\begin{equation*}\label{sumpinch2term45}
		4h_{ijk}^\alpha h_{pjm}^\alpha R_{pikm}+h_{ijk}^\alpha h_{ijp}^\alpha R_{pmkm}=5\ll(1-\frac{S}{2}\rr)\mathcal{B}_1.
	\end{equation*} 
	By \eqref{Curv2}, we obtain
	\begin{equation*}\label{sumpinch2term6}
		\begin{aligned}
			2h_{ijk}^\alpha h_{ijm}^\beta R^{\perp}_{\beta\alpha km}&=16(a_1^\alpha a_2^\beta-a_2^\alpha a_1^\beta)(a^\beta b^\alpha-a^\alpha b^\beta)\\
			&=32\ll(\l a,a_2\r\l b,a_1\r-\l a,a_1\r\l b,a_2\r\rr).
		\end{aligned}
	\end{equation*}
	By \eqref{Curv3}, we get
	\begin{equation*}\label{Last}
		\begin{aligned}
			h_{ijk}^\alpha h_{ij}^\beta R^{\perp}_{\beta\alpha kmm}&=h_{ij1}^\alpha h_{ij}^\beta R^{\perp}_{\beta\alpha 122}-h_{ij2}^\alpha h_{ij}^\beta R^{\perp}_{\beta\alpha 121}\\
			&=4(a_1^\alpha a^\beta+a_2^\alpha b^\beta)(a_2^\beta b^\alpha-a^\beta a_1^\alpha-a_2^\alpha b^\delta+a^\alpha a_1^\beta)\\
			&\hspace{1.3em}-4(a_2^\alpha a^\beta-a_1^\alpha b^\beta)(a_1^\beta b^\alpha+a^\beta a_2^\alpha-a_1^\alpha b^\beta-a^\alpha a_2^\beta)\\
			&=-\frac{1}{2}S\mathcal{B}_1+8\l a,a_2\r\l b,a_1\r-8\l a,a_1\r\l b,a_2\r\\
			&\hspace{1.3em}+4\ll(\l a,a_1\r^2+\l a,a_2\r^2+\l b,a_1\r^2+\l b,a_2\r^2\rr).
		\end{aligned}
	\end{equation*}
	Finally,
	\begin{equation*}
		\begin{aligned}
			(h_{ijk}^\alpha\tri h_{ij}^\alpha)_k
			-\sum(\tri h_{ij}^\alpha)^2&=h_{ijk}^\alpha h_{pik}^\alpha R_{pljl}+h_{ijk}^\alpha h_{lpk}^\alpha R_{pijl}+h_{ijk}^\alpha h_{pi}^\alpha R_{pljlk}\\
			&\hspace{1.3em}+h_{ijk}^\alpha h_{lp}^\alpha R_{pijlk}+h_{ijk}^\alpha h_{lik}^\delta R^{\perp}_{\delta\alpha jl}+h_{ijk}^\alpha h_{li}^\delta R^{\perp}_{\delta\alpha jlk}\\
			&=(2-S)\mathcal{B}_1+16\l a,a_2\r\l b,a_1\r-16\l a,a_1\r\l b,a_2\r-\frac{1}{2}|\nabla S|^2\\
			&\hspace{1.3em}-\frac{1}{2}S\mathcal{B}_1+\frac{1}{4}|\nabla S|^2+16\l a,a_2\r\l b,a_1\r-16\l a,a_1\r\l b,a_2\r\\
			&=(2-S)\mathcal{B}_1-\frac{1}{4}|\nabla S|^2-\frac{1}{2}S\mathcal{B}_1+32\l a,a_2\r\l b,a_1\r-32\l a,a_1\r\l b,a_2\r.
		\end{aligned}
	\end{equation*}
	Combining the above calculations, we obtain
	\begin{equation*}\label{SecSum}
		\begin{aligned}
			\frac{1}{2}\tri\B_1&=h_{ijk}^\alpha\tri h_{ijk}^\alpha+\B_2\\
			&=\frac{1}{2}(14-9S)\mathcal{B}_1-\frac{3}{4}|\nn S|^2+72\l a,a_2\r\l b,a_1\r-72\l a,a_1\r\l b,a_2\r\\
			&\hspace{1.3em}+4\left(\l a,a_1\r^2+\l a,a_2\r^2+\l b,a_1\r^2+\l b,a_2\r^2\right)+\mathcal{B}_2.
		\end{aligned}
	\end{equation*}
	Finally, by \Cref{lem4.1} we obtain
	\begin{equation*}
		\begin{aligned}
			\frac{1}{2}\tri\mathcal{B}_1&=\frac{1}{2}(14-9S)\B_1-\frac{3}{4}|\nabla S|^2+\B_2+4\ll(\l a,a_1\r^2+\l b,a_2\r^2+\l a,a_2\r^2+\l b,a_1\r^2\rr)\\
			&\hspace{1.3em}+72\ll(\l a,a_2\r\l b,a_1\r-\l a,a_1\r\l b,a_2\r\rr)\\
			&=\frac{1}{2}(14-9S)\B_1-\frac{7}{4}|\nn S|^2+\B_2+20\ll(\l a,a_1\r-\l b,a_2\r\rr)^2+20\ll(\l a,a_2\r+\l b,a_1\r\rr)^2.
		\end{aligned}
	\end{equation*}
	which proves the lemma.
\end{proof}
\begin{lem}\label{lem4.3}
	Let $M^2$ be a minimal surface immersed in $\mathbb{S}^{2+q}$. Then $$\B_2=S(2-S)^2-\frac{8-5S}{4}\rho_0^\perp+\C_1,$$ where $\mathcal{C}_1=2\left(|a_{11}-a_{22}|^2+|a_{12}+a_{21}|^2\right)$. 
\end{lem}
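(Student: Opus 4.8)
The plan is to evaluate $\mathcal{B}_2=|\nabla^2 h|^2=\sum_{i,j,k,l,\alpha}(h_{ijkl}^\alpha)^2$ by collapsing the sixteen components $h_{ijkl}^\alpha$ onto the four vectors $a_{kl}=(h^\alpha_{11kl})$, and then recognizing $\mathcal{C}_1$ as exactly the ``symmetric'' part of the resulting sum. Since the Codazzi equation makes $h^\alpha_{ijk}$ totally symmetric in $i,j,k$, the tensor $h^\alpha_{ijkl}$ is symmetric in its first three indices; combined with minimality in the differentiated form $h^\alpha_{11kl}+h^\alpha_{22kl}=0$, every one of the sixteen components equals $\pm$ one of $a_{11},a_{12},a_{21},a_{22}$. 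Tracking the signs and multiplicities, I expect each of $|a_{11}|^2,|a_{12}|^2,|a_{21}|^2,|a_{22}|^2$ to occur exactly four times, giving $\mathcal{B}_2=4\left(|a_{11}|^2+|a_{12}|^2+|a_{21}|^2+|a_{22}|^2\right)$. The polarization identity $|x+y|^2+|x-y|^2=2|x|^2+2|y|^2$ then rewrites this as
\begin{equation*}
\mathcal{B}_2=\mathcal{C}_1+2|a_{11}+a_{22}|^2+2|a_{12}-a_{21}|^2,
\end{equation*}
because $\mathcal{C}_1=2|a_{11}-a_{22}|^2+2|a_{12}+a_{21}|^2$. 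It then remains to prove $2|a_{11}+a_{22}|^2+2|a_{12}-a_{21}|^2=S(2-S)^2-\frac{8-5S}{4}\rho_0^\perp$.

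Next I would compute the two surviving vectors explicitly using the curvature formulas already recorded. The trace $a_{11}+a_{22}$ equals $\Delta h^\alpha_{11}=\Delta a^\alpha$, which I read off from \eqref{Lap1}: using $h^\alpha_{mmij}=0$ by minimality, $R_{ijkl}$ from \eqref{Curv1}, and $R^\perp_{\delta\alpha 12}=2(a^\delta b^\alpha-a^\alpha b^\delta)$ from \eqref{Curv2}, I obtain $a_{11}+a_{22}=(2-S-2|b|^2)\,a+2\langle a,b\rangle\,b$. The antisymmetric vector $a_{12}-a_{21}=h^\alpha_{1112}-h^\alpha_{1121}$ is a Ricci commutator, so I apply the Ricci formula with $(i,j,k,l)=(1,1,1,2)$, again substituting \eqref{Curv1} and the expression for $R^\perp$, to get $a_{12}-a_{21}=(4|a|^2+2|b|^2-2)\,b-2\langle a,b\rangle\,a$.

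Finally, setting $p=|a|^2$, $q=|b|^2$, $r=\langle a,b\rangle$ — so that $S=2p+2q$ and, by \Cref{lem4.1}, $\rho_0^\perp=16(pq-r^2)$ — I would expand $2|a_{11}+a_{22}|^2+2|a_{12}-a_{21}|^2$ as a polynomial in $p,q,r$. The terms proportional to $r^2(8-5S)$ should cancel precisely against the $r^2$-part of $-\frac{8-5S}{4}\rho_0^\perp$, reducing the claim to the purely polynomial identity $2u^2p+2v^2q=S(2-S)^2-4(8-5S)pq$ with $u=(2-S)-2q$ and $v=2p-(2-S)$; after inserting $p+q=\frac{S}{2}$ this collapses to $-16(2-S)+4S=-32+20S$, which is an identity.

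The step I expect to be the main obstacle is the bookkeeping in the reduction to $a_{11},a_{12},a_{21},a_{22}$: one must not conflate $a_{12}$ with $a_{21}$, since these differ exactly by the Ricci curvature term that reappears in the second step, and the signs introduced by minimality must be tracked with care. Once the decomposition $\mathcal{B}_2=\mathcal{C}_1+2|a_{11}+a_{22}|^2+2|a_{12}-a_{21}|^2$ is secured, the evaluation of the two curvature-corrected vectors and the final polynomial check are delicate but routine.
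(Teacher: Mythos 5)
Your proposal is correct and follows essentially the same route as the paper: both reduce $\mathcal{B}_2$ to $4\sum_{k,l}|a_{kl}|^2$, split off $\mathcal{C}_1$ by polarization, and evaluate the two remaining vectors via the curvature identities (the paper identifies $a_{12}-a_{21}$ with $-\triangle b$ and uses \eqref{Lap1} for both, whereas you invoke the Ricci formula directly for that term, yielding the same expression). Your formulas for $a_{11}+a_{22}$ and $a_{12}-a_{21}$ and the final polynomial identity all check out against the paper's computation.
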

\begin{proof}
	By definition, we have
	\begin{equation*}
		\begin{aligned}
			\mathcal{B}_2&=4\left(|a_{11}|^2+|a_{22}|^2+|a_{12}|^2+|a_{21}|^2\right)\\
			&=2\left(|a_{11}+a_{22}|^2+|a_{12}-a_{21}|^2\right)+2\left(|a_{11}-a_{22}|^2+|a_{12}+a_{21}|^2\right)\\
			&=2\left(|\tri a|^2+|\tri b|^2\right)+2\left(|a_{11}-a_{22}|^2+|a_{12}+a_{21}|^2\right)\\
			&=2\left(|\tri a|^2+|\tri b|^2\right)+\mathcal{C}_1,
		\end{aligned}
	\end{equation*}
	where $\mathcal{C}_1=2\left(|a_{11}-a_{22}|^2+|a_{12}+a_{21}|^2\right)$. Here, by \eqref{Lap1} we have
	\begin{align*}
		\tri a&=a(2-S)+2b\l a,b\r-2a|b|^2,\\
		\tri b&=b(2-S)+2a\l a,b\r-2b|a|^2.
	\end{align*}
	Then by Lemma \ref{lem4.1}, we have
	\begin{align*}
		|\tri a|^2&=(2-S)^2|a|^2+\frac{1}{4}|b|^2\rho_0^\perp-\frac{1}{4}(2-S)\rho_0^\perp,\\
		|\tri b|^2&=(2-S)^2|b|^2+\frac{1}{4}|a|^2\rho_0^\perp-\frac{1}{4}(2-S)\rho_0^\perp.
	\end{align*}
	Hence, $$|\tri a|^2+|\tri b|^2=\frac{1}{2}S(2-S)^2-\frac{1}{8}(8-5S)\rho_0^\perp.$$ Therefore, we obtain
	\begin{equation}
		\begin{aligned}
			\mathcal{B}_2&=2\left(|\tri a|^2+|\tri b|^2\right)+\mathcal{C}_1\\
			&=S(2-S)^2-\frac{8-5S}{4}\rho_0^\perp+\mathcal{C}_1,
		\end{aligned}
	\end{equation}
	which proves the lemma.
\end{proof}
\begin{proof}[\textbf{Proof of \Cref{main3}}]
	By \eqref{udef}, we have $$(S-2\lambda_2)^2=S^2-\rho_0^\perp.$$ Using the Simons identity \eqref{simonidentity} and Lemma \ref{lem4.1}, we obtain
	\begin{equation}\label{new4.2}
		\begin{aligned}
			\frac{1}{2}\tri S&=\mathcal{B}_1+2S-|A|^2-\rho_0^\perp\\
			&=\mathcal{B}_1+2S-S^2-\frac{1}{2}\rho_0^\perp\\
			&=\mathcal{B}_1+2S-\frac{3}{2}S^2+2\left(\frac{3}{2}S-\u\right)^2\\
			&=\B_1-\frac{1}{2}S(3S-4)+\frac{1}{2}(S-2\lambda_2)^2.
		\end{aligned}
	\end{equation}
	Integrating over $M$ on both sides of the equation and combining $\B_1\ge0$, we prove \eqref{1stgap}. To prove \eqref{2ndgap}, using the Simons identity \eqref{simonidentity}, by integration by parts and compactness of $M$, we obtain $$\int_M-\frac{1}{2}|\nabla S|^2=\int_M\frac{1}{2}S\tri S=\int_M\left(S\mathcal{B}_1+(2-S)S^2-\frac{S}{2}\rho_0^\perp\right).$$ Then, combining with \eqref{new4.2}, we have
	\begin{equation}\label{000}
		\begin{aligned}
			\int_M\frac{1}{2}(14-9S)\mathcal{B}_1&=\int_M\left(7\mathcal{B}_1-\frac{9}{2}S\mathcal{B}_1\right)\\
			&=\int_M\left(7S^2-14S+\frac{7}{2}\rho_0^\perp+\frac{9}{4}|\nabla S|^2+\frac{9}{2}(2-S)S^2-\frac{9}{4}S\rho_0^\perp\right)\\
			&=\int_M\left(\frac{9}{4}|\nabla S|^2+16S^2-\frac{9}{2}S^3-14S+\frac{1}{4}(14-9S)\rho_0^\perp\right).
		\end{aligned}
	\end{equation}
	Integrating over $M$ on both sides of \Cref{lem4.4} gives that
	\begin{equation}\label{222}
		\begin{aligned}
			\int_M(2-S)\ll(-\frac{3}{2}S^2+|A|^2+\rho_0^\perp\rr)&=\int_M\ll[8\ll(\l a,a_1\r-\l b,a_2\r\rr)^2+8\ll(\l a,a_2\r+\l b,a_1\r\rr)^2\rr]\\
			&=\int_M(2-S)\ll(-\frac{3}{2}S^2+\frac{2S^2-\rho^\perp}{2}+\rho_0^\perp\rr)\\
			&=\int_M(2-S)\ll(-\frac{1}{2}S^2+\frac{1}{2}\rho_0^\perp\rr).
		\end{aligned}
	\end{equation}
	Hence, combining \Cref{lem4.2}, \Cref{lem4.3} with \eqref{000} and \eqref{222} we obtain
	\begin{equation}\label{qqq}
		\begin{aligned}
			0&=\int_M\frac{1}{2}\tri\B_1\\
			&=\int_M\ll[\frac{1}{2}(14-9S)\B_1-\frac{7}{4}|\nabla S|^{2}+\B_2+\frac{5}{2}(2-S)\ll(-\frac{1}{2}S^{2}+\frac{1}{2}\rho_0^{\perp}\rr)\rr]\\
			&=\int_M\ll[\frac{1}{2}|\nabla S|^{2}+\frac{S}{2}(10-7S)(S-2)+\frac{1}{2}(3-2S)\rho_0^{\perp}+\C_1+\frac{5}{2}(2-S)\ll(\frac{1}{2}\rho_0^{\perp}-\frac{1}{2}S^{2}\rr)\rr]\\
			&=\int_M\ll[\frac{1}{2}|\nabla S|^{2}+\frac{S}{4}(S-2)(20-9S)+\frac{1}{4}\rho_0^{\perp}(16-9S)+\C_1\rr]\\
			&=\int_M\ll[\frac{1}{2}|\nabla S|^2-\frac{S}{2}(3S-4)(3S-5)-(16-9S)\ll(\frac{3}{2}S-\u\rr)^2+\C_1\rr].
		\end{aligned}
	\end{equation}
	Thus, combining \Cref{lem4.3}, $\C_1\ge0$ and \eqref{qqq} we obtain
	\begin{equation}
		\begin{aligned}
			0&\le\int_M\left[2\C_1+|\nn S|^2\right]\\
			&=\int_M\left[S(3S-4)(3S-5)+\frac{1}{2}(16-9S)(S-2\lambda_2)^2\right]\\
			&=\int_M\ll[\frac{S}{2}(S-2)(9S-20)+2(\rho^\perp)^2(9S-16)\rr]\\
			&=2\int_M\ll[\mathcal{B}_2+\frac{1}{2}|\nabla S|^2-S(2-S)^2+(8-5S)(\rho^\perp)^2\rr],
		\end{aligned}
	\end{equation}
	which proves \eqref{2ndgap}.
\end{proof}
Now we can give the proofs of \Cref{main5} and \Cref{main6}.
\begin{proof}[\textbf{Proof of \Cref{main5}}]
	First, by using the Gauss equation $2K=2-S$, we obtain $$\rho_0^\perp\le-\gamma K=-\frac{1}{2}\gamma(2-S)=\frac{\gamma}{2}(S-2).$$ 
	Then by \Cref{main3} we have
	\begin{equation*}
		\begin{aligned}
			0&\le\int_M\left[\frac{1}{4}S(S-2)(9S-20)+\frac{1}{4}\rho_0^\perp(9S-16)\right]\\
			&\le\int_M\frac{S-2}{4}\ll[9S^2-20S+\frac{\gamma}{2}(9S-16)\rr]\\
			&=\int_M\frac{S-2}{4}\ll[S\ll(9S-20+\frac{9}{2}\gamma\rr)-8\gamma\rr]\\
			&=\int_M\frac{S-2}{4}\ll(S-S_0\rr)\ll(S-S_0'\rr),
		\end{aligned}
	\end{equation*}
	where $S_0=\frac{40-9\gamma+\sqrt{81\gamma^2+432\gamma+1600}}{36}$ and $S_0'=\frac{40-9\gamma-\sqrt{81\gamma^2+432\gamma+1600}}{36}<0$ for $0\le\gamma\le 4$. Hence, by $2\le S\le S_0$, we obtain that $S\equiv2$ or $S\equiv S_0$. By the classification of Bryant \cite{Bryant}, there is no minimal surface in spheres with $S\equiv S_0>2$. Therefore, $S\equiv2$ and $M$ is the Clifford torus $\mathbb{S}^1(\sqrt{1/2})\times\mathbb{S}^1(\sqrt{1/2})$. To prove \eqref{main5.2}, it suffices to assume that $\mathfrak{u}=S+\lambda_2\le\frac{20}{9}$, which yields that $20-9S\ge0$. It follows by $\rho^\perp\le\frac{1}{2}\sqrt{(S+\lambda_2-2)\gamma S}$ and \eqref{rhorho0} that
	\begin{equation}\label{main5.2int}
		\begin{aligned}
			0&\le\int_M\left[\frac{S}{2}(S-2)(9S-20)+\frac{1}{2}\rho_0^\perp(9S-16)\right]\\
			&=\frac{1}{2}\int_M\left[S(\mathfrak{u}-2)(9S-20)+\rho_0^\perp(9S-16)+\frac{S(20-9S)\rho_0^\perp}{2S+2\sqrt{S^2-\rho_0^\perp}}\right]\\
			&\le\frac{1}{2}\int_M\left[S(\mathfrak{u}-2)(9S-20)+\rho_0^\perp(9S-16)+\frac{20-9S}{2}\rho_0^\perp\right]\\
			&=\frac{1}{2}\int_M\left[S(\mathfrak{u}-2)(9S-20)+\frac{1}{2}\rho_0^\perp(9S-12)\right]\\
			&\le\frac{1}{2}\int_M\left[S(\mathfrak{u}-2)\left(\frac{1}{2}(18+9\gamma)S-(20+6\gamma)\right)\right].
		\end{aligned}
	\end{equation}
	Suppose that $S\le\frac{40+12\gamma}{18+9\gamma}$ for some $0\le\gamma\le\frac{2}{3}$, it follows by \eqref{main5.2int} and the assumption $\u>2$ that $S\equiv\frac{40+12\gamma}{18+9\gamma}\ge2$ for some $0\le\gamma\le\frac{2}{3}$. If $0\le\gamma<\frac{2}{3}$, then $S\equiv\frac{40+12\gamma}{18+9\gamma}>2$. Similarly, there is no minimal surface in spheres with $S\equiv\frac{40+12\gamma}{18+9\gamma}>2$ by Bryant \cite{Bryant}. Hence, we obtain that $\gamma=\frac{2}{3}$, $S\equiv2$ and $M$ is the Clifford torus $\mathbb{S}^1(\sqrt{1/2})\times\mathbb{S}^1(\sqrt{1/2})$. Thus we have $\lambda_2=0$ and $S+\lambda_2\equiv 2$, which contradicts with $S+\lambda_2>2$. Therefore, we obtain that $$\max_{p\in M}(S+\lambda_2)(p)\ge\max_{p\in M}S(p)>\frac{40+12\gamma}{18+9\gamma}\ge2,$$ which completes the proof.
\end{proof}

\begin{proof}[\textbf{Proof of \Cref{main6}}]
	If $\rho^\perp\ge\frac{1}{2}\sqrt{1-\tau^2}S$ for some $0\le\tau\le 1$, then we obtain that  the function t defined in \eqref{tdef} satisfies $$0\le t\le\tau\le1.$$ It follows that
	\begin{equation*}
		\begin{aligned}
			\max_{p\in M}(S+\lambda_2)(p)&\ge\frac{1}{\operatorname{Area}(M)}\int_M\left(S+\lambda_2\right)\\
			&=\frac{1}{\operatorname{Area}(M)}\int_M\frac{1}{2}(3-t)S\\
			&\ge\frac{1}{\operatorname{Area}(M)}\int_M\frac{1}{2}(3-\tau)S\\
			&=\frac{3-\tau}{2\operatorname{Area}(M)}\left(2\operatorname{Area}(M)-4\pi\chi(M)\right)\\
			&=(3-\tau)\left(1-\frac{2\pi\chi(M)}{\operatorname{Area}(M)}\right).
		\end{aligned}
	\end{equation*}
	On the other hand, if $\rho^\perp\le\frac{1}{2}\sqrt{1-\tau^2}S$ for some $\frac{\sqrt{9+3\sqrt{5}}}{4}\le\tau\le 1$, then we obtain that $\frac{\sqrt{9+3\sqrt{5}}}{4}\le \tau\le t\le 1$. It follows by \Cref{main3} that
	\begin{equation}\label{star}
		\begin{aligned}
			0&\le\int_M\left[S(3S-4)(3S-5)+\frac{1}{2}(16-9S)(S-2\lambda_2)^2\right]\\
			&=\int_M\frac{S}{2}\left(18-9t^2\right)\left[\left(S-\frac{27-8t^2}{18-9t^2}\right)^2+\frac{\frac{45}{4}-\left(8t^2-\frac{9}{2}\right)^2}{\left(18-9t^2\right)^2}\right]\\
			&=\int_M\frac{S}{2}\left(18-9t^2\right)(S-T_B(t))(S-T_A(t))\\
			&=\int_M\frac{S}{2}\left(18-9t^2\right)\frac{4}{(3-t)^2}\left(\mathfrak{u}-\frac{3-t}{2}T_B(t)\right)\left(\mathfrak{u}-\frac{3-t}{2}T_A(t)\right)\\
			&=\int_M\frac{4S}{2(3-t)^2}\left(18-9t^2\right)\left(\mathfrak{u}-\hat{T}_B(t)\right)\left(\mathfrak{u}-\hat{T}_A(t)\right).
		\end{aligned}
	\end{equation}
	Thus it is impossible to satisfy
	\begin{equation*}
		\hat{T}_B(t)<\mathfrak{u}=S+\lambda_2<\hat{T}_A(t).
	\end{equation*}
	Since $\hat{T}_B(t)$ is monotonically decreasing and $\hat{T}_A(t)$ is monotonically increasing for $\tau\le t\le 1$, it follows from the assumption $S+\lambda_2>\hat{T}_B(\tau)$ and \eqref{star} that there exists $p_0\in M$ such that $(S+\lambda_2)(p_0)\ge\hat{T}_A(\tau)$, i.e., $$\max_{p\in M}(S+\lambda_2)(p)\ge\hat{T}_A(\tau)=\hat{T}_B(\tau)+\sigma(\tau),$$ where $$\sigma(\tau)=\hat{T}_A(\tau)-\hat{T}_B(\tau)=\frac{(3-\tau)\sqrt{\left(8\tau^2-\frac{9}{2}\right)^2-\frac{45}{4}}}{18-9\tau^2}$$ is a monotonically increasing function of $\frac{\sqrt{9+3\sqrt{5}}}{4}\le\tau\le 1$. In particular, if $M$ is a minimal surface with flat normal bundle, we have $\rho^\perp=\rho_0^\perp=0$. Then, using \Cref{lem eigen of A} we obtain $$\lambda_2=\frac{1}{2}S-\frac{1}{2}\sqrt{S^2-\rho_0^\perp}=0.$$ Thus we can choose $\tau=1$, which implies that $\hat{T}_B(\tau)=\hat{T}_B(1)=2$. If $S+\lambda_2>2$, it follows that $$\max_{p\in M}(S+\lambda_2)(p)\ge\hat{T}_A(1)=\frac{20}{9}.$$ Suppose that $\max_{p\in M}(S+\lambda_2)(p)=\max_{p\in M}S(p)\le\frac{20}{9}$, then $S+\lambda_2=S\le\frac{20}{9}$. Since $\hat{T}_B(1)=2<S\le\frac{20}{9}=\hat{T}_A(1)$, by \eqref{star} we obtain $S\equiv\frac{20}{9}$. However, by the classification in \cite{Bryant}, there is no minimal surface in spheres with $S\equiv\frac{20}{9}$. Therefore, $$\max_{p\in M}(S+\lambda_2)(p)=\max_{p\in M}S(p)>\frac{20}{9},$$ which completes the proof.
\end{proof}

\begin{acknow}
The authors are grateful to Professor Zhiqin Lu (The University of California, Irvine) for his constant encouragement and support.
\end{acknow}


\begin{thebibliography}{100}
	\bibitem{BKSS} K. Benko, M. Kothe, K. D. Semmler and U. Simon, Eigenvalues of the Laplacian and curvature, Colloq. Math., 1979, 42(1): 19--31.
	\bibitem{Bryant} R. Bryant, Minimal surfaces of constant curvature in $\mathbb{S}^n$, Trans. Amer. Math. Soc., 1985, 290(1): 259--271.
	\bibitem{Calabi} E. Calabi, Minimal immersions of surfaces in Euclidean spheres, J. Differential Geom., 1967, 1(1-2): 111--125.
	\bibitem{CDK} S. S. Chern, M. do Carmo and S. Kobayashi, Minimal submanifolds of a sphere with second fundamental form of constant length, In: Functional Analysis and Related Fields: Proceedings of a Conference in honor of Professor Marshall Stone, held at the University of Chicago, May 1968. Berlin: Springer, 1970, 59--75.
	\bibitem{Choi-Lu} T. Choi and Z. Q. Lu, On the DDVV conjecture and the comass in calibrated geometry I, Math. Z., 2008, 260(2): 409--429.
	\bibitem{DDVV} P. J. De Smet, F. Dillen, L. Verstraelen and L. Vrancken, A pointwise inequality in submanifold theory, Arch. Math. (Brno), 1999, 35(2): 115--128.
	\bibitem{Dillen} F. Dillen, J. Fastenakels and J. Veken, Remarks on an inequality involving the normal scalar curvature, arXiv preprint, 2006, arXiv:math/0610721.
	\bibitem{Ding-Xin} Q. Ding and Y. L. Xin, On Chern's problem for rigidity of minimal hypersurfaces in the spheres, Adv. Math., 2011, 227(1): 131--145.
	\bibitem{DGL} W. R. Ding, J. Q. Ge and F. G. Li, Pinching rigidity of minimal surfaces in spheres, Sci. China Math., 2025, 68(9): 2189--2206.
	\bibitem{GLTZ} J. Q. Ge, F. G. Li, Z. Z. Tang and Y. Zhou, A survey on the DDVV-type inequalities, Adv. Math. (China), 2024, 53(3): 449--467.
	\bibitem{GLZ} J. Q. Ge, F. G. Li and Y. Zhou, Some generalizations of the DDVV and BW inequalities, Trans. Amer. Math. Soc., 2021, 374(8): 5331--5348.
	\bibitem{Ge-Tang} J. Q. Ge and Z. Z. Tang, A proof of the DDVV conjecture and its equality case, Pacific J. Math., 2008, 237(1): 87--95.
	\bibitem{GeTaoZhou} J. Q. Ge, Y. Tao and Y. Zhou, Normal scalar curvature inequality on a class of austere submanifolds, to appear in Math. Z., arXiv:2410.17761.
	\bibitem{GuXuXu} J. R. Gu, H. W. Xu, Z. Y. Xu, et al., A survey on rigidity problems in geometry and topology of submanifolds, In: Proceedings of the 6th International Congress of Chinese Mathematicians. Advanced Lectures in Mathematics, vol. 37. Beijing-Boston: Higher Education Press-International Press, 2016, 79--99.
	\bibitem{Simon} M. Kozlowski and U. Simon, Minimal immersions of $2$-manifolds into spheres, Math. Z., 1984, 186(3): 377--382.
	\bibitem{LX18} Y. Leng and H. W. Xu, The generalized Lu rigidity theorem for submanifolds with parallel mean curvature, Manuscripta Math., 2018, 155(1): 47--60.
	\bibitem{LeiXuXu} L. Lei, H. W. Xu and Z. Y. Xu, On the generalized Chern conjecture for hypersurfaces with constant mean curvature in a sphere, Sci. China Math., 2021, 64(7): 1493--1504.
	\bibitem{LiLi 1992} A. M. Li and J. M. Li, An intrinsic rigidity theorem for minimal submanifolds in a sphere, Arch. Math. (Basel), 1992, 58(6): 582--594.
	\bibitem{Lu} Z. Q. Lu, Normal scalar curvature conjecture and its applications, J. Funct. Anal., 2011, 261(5): 1284--1308.
	\bibitem{PengT} C. K. Peng and C. L. Terng, The scalar curvature of minimal hypersurfaces in spheres, Math. Ann., 1983, 266(1): 105--113.
	\bibitem{Yau} M. Scherfner, S. Weiss and S. T. Yau, A review of the Chern conjecture for isoparametric hypersurfaces in spheres, In: Advances in Geometric Analysis, Advanced Lectures in Mathematics, vol. 21. Beijing-Boston: Higher Education Press-International Press, 2012, 175--187.
	\bibitem{YBShen 1989} Y. B. Shen, On intrinsic rigidity for minimal submanifolds in a sphere, Sci. China Ser. A, 1989, 32(7): 769--781.
	\bibitem{Simon2} U. Simon, Eigenvalues of the Laplacian and minimal immersions into spheres, In: Differential Geometry, Montreal: Pitman, Boston, MA, 1985, 131: 115--120.
	\bibitem{Simons} J. Simons, Minimal varieties in Riemannian manifolds, Ann. of Math. (2), 1968, 88(1): 62--105.
	\bibitem{TWY} Z. Z. Tang, D. Y. Wei and W. J. Yan, A sufficient condition for a hypersurface to be isoparametric, Tohoku Math. J. (2), 2020, 72(4): 493--505.
	\bibitem{TY} Z. Z. Tang and W. J. Yan, On the Chern conjecture for isoparametric hypersurfaces, Sci. China Math., 2023, 66(1): 143--162.
	\bibitem{XuXu} H. W. Xu and Z. Y. Xu, On Chern's conjecture for minimal hypersurfaces and rigidity of self-shrinkers, J. Funct. Anal., 2017, 273(11): 3406--3425.
	\bibitem{XuXW2025} X. W. Xu, Minimal two-spheres with constant curvature in symmetric spaces (in Chinese), Sci. Sin. Math., 2025, 55(1): 1--14.
	\bibitem{Yang-Cheng3} H. C. Yang and Q. M. Cheng, Chern's conjecture on minimal hypersurfaces, Math. Z., 1998, 227(3): 377--390.
	\bibitem{Yau 1974} S. T. Yau, Submanifolds with constant mean curvature I, Amer. J. Math., 1974, 96(2): 346--366.
	\bibitem{Yau 1975} S. T. Yau, Submanifolds with constant mean curvature II, Amer. J. Math., 1975, 97(1): 76--100.
\end{thebibliography}
\end{document}